\newtheorem{theorem}{Theorem}[section]
\newtheorem{lemma}[theorem]{Lemma}
\newtheorem{proposition}[theorem]{Proposition}
\declaretheorem[name=Definition,style=definition,qed=$\blacktriangle$,numberlike=theorem]{definition}
\declaretheorem[name=Example,style=definition,qed=$\blacktriangle$,numberlike=theorem]{example}
\declaretheorem[name=Remark,style=definition,qed=$\blacktriangle$,numberlike=theorem]{rmk}
\numberwithin{equation}{section}
\title{Some results on calibrated submanifolds in Euclidean space of cohomogeneity one and two}
\author{Faisal Romshoo \\ \vspace{-0.25 cm}
\emph{Department of Pure Mathematics, University of Waterloo} \\
\tt{fromshoo@uwaterloo.ca}}
\date{August 1, 2025}
\begin{document}

\maketitle
\begin{abstract}
    We construct calibrated submanifolds in Euclidean space invariant under the action of a Lie group $G$. We first demonstrate the method used in this paper by reproducing the results about special Lagrangians in Harvey-Lawson \cite{harveylawson}. We then show explicitly that an associative submanifold in $\R^7$ invariant under the action of a maximal torus $\bT^2 \subset \Gt$ has to be a special Lagrangian submanifold in $\C^3$. Similarly, we also show that a Cayley submanifold in $\R^8$ invariant under the action of a maximal torus $\bT^3 \subset \Spin(7)$ has to be a special Lagrangian submanifold in $\C^4$. We construct coassociative submanifolds in $\R^7$ invariant under the action of $\Sp(1)\subset \bH$ with a more general ansatz than the one in \cite{harveylawson} but we recover exactly the $\Sp(1)$-invariant coassociatives in \cite{harveylawson}, giving us a rigidity result. Finally, we construct cohomogeneity two examples of coassociative submanifolds in $\R^7$ which are invariant under the action of a maximal torus $\bT^2 \subset \Gt$.
\end{abstract}

\section{Introduction}
While the underlying principle of \textit{calibrated submanifolds} was first identified by Berger (see \cite[Section 6.5]{MR3497381} for details), the concept was formally introduced and developed by Harvey and Lawson \cite{harveylawson} in their seminal 1982 paper. They are a special class of minimal (vanishing mean curvature) submanifolds of a Riemannian manifold $M$ defined by a particular type of closed form on $M$ known as a \textit{calibration}. They are characterized by a first order non-linear PDE and are locally volume-minimizing in their homology class. In Section \ref{calibrations}, we describe calibrated submanifolds in detail.  In geometry, it is a general
principle that objects with symmetries are more interesting
to study. In this paper, we consider a method of constructing calibrated submanifolds which are invariant under the action of a Lie group $G$. \\

A  list of examples of constructions of calibrated submanifolds with symmetries is as follows:
\begin{itemize}
    \item $\bT^{n-1}$-invariant special Lagrangians (SLs) \cite{goldstein}.
    \item $\bT^{n-1}$-invariant SLs, $\SO(n)$-invariant SLs, $\Sp(1)$-invariant coassociatives \cite{harveylawson}.
    \item $U(1)$-invariant SL cones in $\C^3$ \cite{haskins2000speciallagrangiancones}.
    \item  $\SO(3)$-invariant SL $3$-folds, $\bT^2$-invariant $3$-folds \cite{minoo}.
    \item $\mathrm{U}(1)$-invariant SL $3$-folds, $\mathrm{U}(1)^{m-1}$-invariant SL cones in $\C^m$ \cite{joyce2001EESL}, \cite{joyce2001EQ}, \cite{joyce2001sym}, \cite{joyce2002u1}, \cite{joyce2004i},\cite{joyce2004ii},\cite{joyce2004iii}.
    \item $\mathrm{U(1)}$-invariant associative cones, $\SU(2)$-invariant Cayleys \cite{lotay2006constructingassociative3foldsevolution}, \cite{Lotay_2006}.
\end{itemize}
For a list of the other methods used to construct calibrated submanifolds see \cite[Section 5]{lotay2018calibratedsubmanifolds}. We explain all the steps of the method we use throughout this paper in Section \ref{method}. We use this method to construct different types of calibrated submanifolds in the rest of the sections. We consider special Lagrangians in Sections \ref{maxtorusSL} and \ref{soninvariant},  associatives in $\R^7$ in Section \ref{t2invariantassoc}, coassociatives in $\R^7$ in Sections \ref{sp1invariant} and \ref{t2invariantcoass}, and Cayleys in $\R^8$ in Section \ref{cayley}.\\

We reproduce and review some results about SL $n$-folds of Harvey-Lawson \cite{harveylawson} in Sections \ref{maxtorusSL} and \ref{soninvariant}  to fix notation and prepare the reader for our new constructions in the later sections.  We also construct $\Sp(1)$-invariant coassociatives in Section \ref{sp1invariant}, where we begin with a more general ansatz but recover exactly the $\Sp(1)$-invariant coassociatives in \cite{harveylawson} (Theorem \ref{sp1rigidity}).  \\

Motivated by the $\bT^{n-1}$-invariant SL example in \cite{harveylawson}, we then look at calibrated submanifolds invariant under the action of a maximal torus. We show explicitly in Section \ref{t2invariantassoc} that an associative submanifold in $\R^7$ invariant under the action of a maximal torus $\bT^2 \subset \Gt$ has to be a special Lagrangian submanifold in $\C^3$ (Theorem \ref{assocslthm}). In a similar vein, in Section \ref{cayley}, we show explicitly that a Cayley submanifold in $\R^8$ invariant under the action of a maximal torus $\bT^3 \subset \Spin(7)$ has to be a special Lagrangian submanifold in $\C^4$ (Theorem \ref{cayleysl}).\\

We construct examples of coassociative submanifolds in $\R^7$ which are invariant under the action of a maximal torus $\bT^2 \subset \Gt$ in Section \ref{t2invariantcoass}, which is new. These examples have cohomogeneity two. That is, the orbits of the group are of codimension two.\\ 

\textbf{Acknowledgements}. The author would like to express gratitude to his advisor, Spiro Karigiannis, for his immense help and guidance throughout the development of this paper. The author would also like to thank Jason Lotay for his helpful comments regarding the results in Sections \ref{t2invariantassoc} and \ref{cayley}. The author would also like to thank the anonymous referees for their careful reading and valuable suggestions that improved this paper.

\section{Calibrated Geometry}
\label{calibrations}

 We start by defining \textit{calibrations} and \textit{calibrated submanifolds} on a Riemannian manifold $(M,g)$. More about calibrated geometry can be found in \cite{harveylawson} and \cite{djcalibratedgeo}. \\

Given an oriented $k$-dimensional vector subspace $V$ of the tangent space $T_x M$ for some $x \in M$, which we call an \textit{oriented tangent $k$-plane}, there is a Euclidean metric $g|_V$ on $V$. Thus, from $g|_V$ and the orientation on $V$, we obtain a natural volume form $\vol_V$ on $V$. 
\begin{definition}
    A closed $k$-form $\alpha$ is called a \textit{calibration} on $M$ if for all oriented tangent $k$-planes $V$ on $M$, we have
    \begin{align*}
        \alpha|_{V} \leq \vol_V.
    \end{align*}
    We say that an oriented $k$-dimensional submanifold $N$ of $M$ is a \textit{calibrated submanifold} if \begin{align*}
    \alpha|_{T_x N} = \vol_{T_x N},
    \end{align*}
    for all $x \in N$. 
\end{definition}
The examples of calibrated submanifolds that form the focus of this paper are: special Lagrangians in $\C^m$, associatives and coassociatives in $\R^7$, and Cayleys in $\R^8$. We now define each of them. 

\begin{definition}
    Consider $\C^m$ with the complex coordinates $(z_1, \dotsc, z_m)$ with the following standard metric $g$, symplectic form $\omega$, and holomorphic volume form $\Omega$: \begin{align*}
        g &= |dz_1|^2 + \dotsb + |dz_m|^2, \\
        \omega &= \frac{i}{2}(dz_1 \wedge d\overline{z}_1 + \dotsb +dz_m \wedge d\overline{z}_m ),\\
        \Omega &= dz_1 \wedge \dotsb \wedge dz_m. 
    \end{align*}
    Since $\re \Omega$ and $\im \Omega$ are real $m$-forms, \begin{align*}
       \cos\theta \re \Omega+\sin\theta \im \Omega
    \end{align*}
    is a real $m$-form on $\C^m$ and it is a calibration. We say that an $m$-dimensional oriented real submanifold $L$ of $\C^m$ is a \textit{special Lagrangian submanifold} (SL $m$-fold) of $\C^m$ with phase $e^{i \theta}$ for some $\theta \in [0, 2\pi)$ if it is calibrated with respect to $ \cos\theta \re \Omega+\sin\theta \im \Omega$.    
\end{definition}
In this paper, we take $\theta = 0$, which means that we want $L$ to be calibrated by $\re \Omega$. From \cite[Cor. III.1.11]{harveylawson}, we have the following characterization of special Lagrangians which is easier to work with in practice.  
\begin{proposition}
    Let $L$ be a real $m$-dimensional oriented submanifold of $\C^m$. Up to a possible change of orientation, $L$ is a special Lagrangian if and only if \begin{align}
        \omega|_L &= 0
    \end{align}
    and
    \begin{align}    
        \im \Omega|_L &= 0.
    \end{align} 
\end{proposition}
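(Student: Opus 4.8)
The plan is to reduce the statement to a pointwise question in linear algebra about oriented real $m$-planes $V \subset \C^m$, and then settle that question with a Wirtinger-type inequality. By definition, $L$ (with $\theta = 0$) is special Lagrangian if and only if $\re\Omega|_{T_xL} = \vol_{T_xL}$ for every $x \in L$, so it is enough to understand, for an oriented $m$-plane $V \subset \C^m$ with oriented orthonormal basis $e_1,\dots,e_m$, exactly when $\re\Omega(e_1,\dots,e_m) = 1$, and to recognize that condition in terms of $\omega|_V$ and $\im\Omega|_V$.

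First I would introduce the complex number $\phi(V) := \Omega(e_1,\dots,e_m)$. Since any two oriented orthonormal bases of $V$ differ by an element of $SO(m)$ and $\Omega$ is an $m$-form, $\phi(V)$ depends only on $V$, and $\re\Omega|_V = \re\phi(V)\,\vol_V$ while $\im\Omega|_V = \im\phi(V)\,\vol_V$. Let $C = [\,e_1\,|\,\cdots\,|\,e_m\,]$ be the $m\times m$ complex matrix whose columns are $e_1,\dots,e_m \in \C^m$, so $\phi(V) = \det_{\C} C$. Viewing $\C^m$ as $\R^{2m}$, the Euclidean inner product of $e_j$ and $e_k$ is $\re\langle e_j,e_k\rangle$ and $\omega(e_j,e_k) = \pm\,\im\langle e_j,e_k\rangle$, where $\langle\,\cdot\,,\cdot\,\rangle$ is the standard Hermitian form; hence orthonormality of $e_1,\dots,e_m$ says precisely $\re(C^*C) = I_m$, and $V$ is Lagrangian (i.e.\ $\omega|_V = 0$) if and only if $C^*C = I_m$, i.e.\ $C$ is unitary.

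The core step is to prove that, whenever $\re(C^*C) = I_m$, one has $|\phi(V)|^2 = |\det_{\C} C|^2 = \det(C^*C) \le 1$, with equality if and only if $C$ is unitary. I would argue this by noting that $H := C^*C$ is Hermitian positive semidefinite and can be written $H = I_m + iK$ for a real skew-symmetric $K$; since $I_m$ and $iK$ commute and $iK$ is Hermitian with eigenvalues $0, \pm\mu_1,\dots,\pm\mu_r$ ($\mu_j > 0$), the eigenvalues of $H$ are $1$ and $1\pm\mu_j$, and positive semidefiniteness forces $\mu_j \le 1$, so $\det H = \prod_j (1-\mu_j^2) \le 1$ with equality exactly when $K = 0$, i.e.\ $H = I_m$. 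Combining, $\re\Omega|_V = \re\phi(V)\,\vol_V \le |\phi(V)|\,\vol_V \le \vol_V$, with equality throughout if and only if $\phi(V) = 1$, equivalently $|\phi(V)| = 1$ (that is, $\omega|_V = 0$) together with $\phi(V)$ real and positive. Applying this at each point of $L$: $L$ is calibrated by $\re\Omega$ iff $\omega|_{T_xL} = 0$ and $\Omega|_{T_xL}$ is real and positive for all $x$; conversely, if $\omega|_L = 0$ and $\im\Omega|_L = 0$ then $\phi(T_xL) = \pm 1$, which by continuity is constant on each connected component, and reversing the orientation on the components where it equals $-1$ makes $L$ calibrated --- this is the asserted ``up to a possible change of orientation''.

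The only genuine obstacle is the Wirtinger-type inequality $\det(C^*C)\le 1$ subject to $\re(C^*C) = I_m$, together with the identification of its equality case with the Lagrangian condition; the rest is bookkeeping. An alternative route for the equality case is to use that $U(m)$ acts transitively on Lagrangian $m$-planes with stabilizer $O(m)$, so a Lagrangian $V$ admits a unitary basis $u_1,\dots,u_m$ and $\phi(V) = \det_{\C}(u_1,\dots,u_m) \in U(1)$ directly, and then to establish strict inequality on non-Lagrangian planes separately; the eigenvalue computation above has the advantage of handling both cases at once.
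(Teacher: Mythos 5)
Your proof is correct. Note that the paper does not actually prove this proposition --- it simply cites \cite[Cor.~III.1.11]{harveylawson} --- so there is no in-paper argument to compare against; what you have written is essentially a self-contained reconstruction of the standard Harvey--Lawson argument. The reduction to the pointwise statement about $\phi(V) = \det_{\C}C$ is sound: $\phi(V)$ is well-defined on oriented planes because an oriented orthonormal change of basis multiplies $\det_{\C}C$ by $\det R = 1$ for $R \in \SO(m)$, and the identifications $\re(C^*C) = I_m$ (orthonormality) and $C^*C = I_m$ (Lagrangian) are the right ones. The key inequality $\det(C^*C) \le 1$ via $H = I_m + iK$ with $K$ real skew-symmetric is correct: the nonzero eigenvalues of $K$ pair up as $\pm i\mu_j$, positive semidefiniteness of $H$ forces $\mu_j \le 1$, and $\det H = \prod_j(1-\mu_j^2) \le 1$ with equality exactly when $K = 0$. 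The chain $\re\phi \le |\phi| \le 1$ then gives both directions at once, including the forward implication (calibrated $\Rightarrow$ $\phi = 1$ $\Rightarrow$ Lagrangian and $\im\Omega|_V = 0$), and your locally-constant-sign argument correctly accounts for the ``up to a possible change of orientation'' clause, applied component by component. The one convention point worth making explicit is the sign relation between $\omega$ and the imaginary part of the Hermitian form, but since only the vanishing of $\omega|_V$ is used, the hedged $\pm$ is harmless.
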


Next, we define associative and coassociative submanifolds of $\R^7$. We use the conventions from \cite{lotay2018calibratedsubmanifolds}.

\begin{definition}
    Let $e_1, \dotsc, e_7$ be an orthonormal basis for $\R^7$ and we denote $e_{i} \wedge e_{j} \wedge \dotsb \wedge e_k$ by $e_{ij\dotsb k}$, where we identify $\R^7$ with $(\R^7)^*$ using the metric. We define a $3$-form $\varphi$ on $\R^7$
    by 
    \begin{align}
    \label{phi}
        \varphi = e_{123}+e_{145}+e_{167}+e_{246}- e_{257}-e_{347}-e_{356}.
    \end{align}
    From \cite[Theorem IV.1.4]{harveylawson}, $\varphi$ is a calibration on $\R^7$ and $3$-dimensional submanifolds $L$ of $\R^7$ calibrated by $\varphi$ are known as \textit{associative $3$-folds}.
\end{definition}
Let $\psi = \star \varphi$ be the $4$-form defined as the Hodge dual of $\varphi$. Then, it is given as 
\begin{align}
\label{psi}
    \psi = e_{4567}+e_{2367}+e_{2345}+e_{1357}-e_{1346}-e_{1256}-e_{1247}.
\end{align}
We can think of $\psi$ as a vector valued $3$-form by using the inner product as follows: 
\begin{align}
    \inp{\psi(V_1, V_2, V_3)}{V_4} = \psi(V_1, V_2, V_3, V_4),
\end{align}
for tangent vectors $V_i$. Using octonionic multiplication, associative $3$-folds can be given an alternative description in terms of $
\psi$ which is more useful (see \cite[Section 2]{Karigiannis_2005} for more details). 
\begin{proposition}{\cite[Proposition 2.3]{Karigiannis_2005}}
\label{associativeschar}
    The subspace spanned by the tangent vectors $V_1, V_2, V_3$ is associative for some orientation if and only if 
    $\psi_i = 0$ for all $i$, where
    \begin{align*}
        \psi_i = \inp{\psi(V_1, V_2, V_3)}{e_i} = \psi(V_1, V_2, V_3,e_i),
    \end{align*}
    for $1 \leq i \leq 7$.
\end{proposition}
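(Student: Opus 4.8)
The plan is to reduce the statement to a piece of linear algebra on $\R^7$ (identified with the imaginary octonions) and to finish with the equality case of the Cauchy--Schwarz inequality. Throughout we assume $V_1, V_2, V_3$ are linearly independent, so that they span a $3$-plane. Since $\psi$ is alternating and multilinear, replacing $(V_1, V_2, V_3)$ by any other basis of their span multiplies every $\psi_i = \psi(V_1, V_2, V_3, e_i)$ by the (nonzero) determinant of the change of basis; hence both the span and the condition ``$\psi_i = 0$ for all $i$'' are unaffected, and after applying Gram--Schmidt we may assume that $\{V_1, V_2, V_3\}$ is orthonormal.

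The first thing I would establish is the standard ``triple contraction'' identity for the $G_2$-structure, relating $\psi$ to the cross product $u \times v$ defined by $\inp{u \times v}{w} = \varphi(u, v, w)$: for all $u, v, w \in \R^7$,
\begin{align*}
    \psi(u, v, w, \cdot)^{\sharp} \;=\; u \times (v \times w) + \inp{u}{v}\, w - \inp{u}{w}\, v,
\end{align*}
up to an overall sign and the precise placement of the metric correction terms, together with the norm identity $|u \times v|^2 = |u|^2 |v|^2 - \inp{u}{v}^2$; both are elementary consequences of octonion multiplication and can also be checked directly on the basis $e_1, \dotsc, e_7$ (cf.\ \cite[Section 2]{Karigiannis_2005}). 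For an orthonormal triple the correction terms vanish, so $\psi(V_1, V_2, V_3, \cdot)^{\sharp} = \pm\, V_1 \times (V_2 \times V_3)$, and therefore ``$\psi_i = 0$ for all $i$'' is equivalent to $V_1 \times (V_2 \times V_3) = 0$.

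To finish, put $W := V_2 \times V_3$. Because $\varphi$ is alternating, $\inp{W}{V_2} = \varphi(V_2, V_3, V_2) = 0 = \inp{W}{V_3}$, while $\inp{W}{V_1} = \varphi(V_2, V_3, V_1) = \varphi(V_1, V_2, V_3)$ and $|W|^2 = |V_2|^2 |V_3|^2 - \inp{V_2}{V_3}^2 = 1$. Hence, by the norm identity again,
\begin{align*}
    \bigl| V_1 \times (V_2 \times V_3) \bigr|^2 \;=\; |V_1|^2 |W|^2 - \inp{V_1}{W}^2 \;=\; 1 - \varphi(V_1, V_2, V_3)^2 ,
\end{align*}
so ``$\psi_i = 0$ for all $i$'' is equivalent to $\varphi(V_1, V_2, V_3) = \pm 1$. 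For the orthonormal triple $V_1, V_2, V_3$, this last condition says exactly that $\varphi$ (respectively $-\varphi$) restricts to the volume form of $\operatorname{span}(V_1, V_2, V_3)$, that is, that $\operatorname{span}(V_1, V_2, V_3)$ is associative for one of its two orientations. This proves the proposition.

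I expect the only real work to be the first step: pinning down the triple-contraction identity and the cross-product norm identity; the remainder is bookkeeping and a single application of the Cauchy--Schwarz equality case, and the argument is insensitive to sign conventions in the identity. A cosmetic variant replaces the $\psi$-identity by the relation between $\psi(V_1, V_2, V_3, \cdot)^{\sharp}$ and the octonion associator $[V_1, V_2, V_3] := (V_1 V_2) V_3 - V_1 (V_2 V_3)$, which agree up to a fixed nonzero scalar; the vanishing of the associator is the classical condition for $V_1, V_2, V_3$ to lie in a quaternion subalgebra $\R \oplus P$, with $P$ the associative $3$-plane in question.
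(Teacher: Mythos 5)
The paper does not prove this proposition: it is imported verbatim from \cite[Proposition 2.3]{Karigiannis_2005}, so there is no in-text argument to measure yours against. On its own merits your proof is correct, and it is essentially a self-contained reconstruction of the standard one. The two identities you invoke --- $\psi(u,v,w,\cdot)^{\sharp} = \pm\bigl(u\times(v\times w) + \langle u,v\rangle w - \langle u,w\rangle v\bigr)$ and $|u\times v|^2 = |u|^2|v|^2 - \langle u,v\rangle^2$ --- are both standard and easily verified on the basis $e_1,\dotsc,e_7$ for the conventions \eqref{phi}, \eqref{psi}; combined they give, for an orthonormal triple, $|\psi(V_1,V_2,V_3,\cdot)|^2 = 1 - \varphi(V_1,V_2,V_3)^2$, which is precisely the Harvey--Lawson associator equality in this special case, and the conclusion $\varphi|_P = \pm\vol_P$ follows as you say. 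Your observation that after Gram--Schmidt any metric correction terms vanish regardless of their exact placement, and that the overall sign is immaterial, is what makes the argument robust to conventions, so the vagueness in the statement of the triple-contraction identity is not a gap. One small point worth making explicit: the standing assumption that $V_1, V_2, V_3$ are linearly independent is genuinely needed for the ``only if'' reading of the equivalence, since for a degenerate triple every $\psi_i$ vanishes while no associative $3$-plane is spanned; in the paper this hypothesis is supplied by the immersion lemma established before the proposition is used.
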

Similarly, we now define coassociative submanifolds and give an equivalent characterization of them which is easier to work with. 
\begin{definition}
We know from \cite[Theorem IV.1.16]{harveylawson} that $\psi$ is a calibration. We call $4$-dimensional submanifolds $L$ of $\R^7$ calibrated by $\psi$ as \textit{coassociative $4$-folds}.
\end{definition}
We have the following alternate description from \cite[Proposition IV.4.5 $\&$ Theorem IV.4.6]{harveylawson}
\begin{proposition}
\label{coasociativeprop}
    Up to a change of orientation, a $4$-dimensional submanifold $L$ of $\R^7$ is coassociative if and only if \begin{align*}
        \varphi|_L = 0.
    \end{align*}
\end{proposition}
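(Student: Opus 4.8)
The plan is to reduce the statement to a pointwise fact about $4$-planes and then exploit Hodge duality together with Proposition \ref{associativeschar}. Both conditions in the proposition --- being coassociative and satisfying $\varphi|_L=0$ --- are conditions on the tangent plane $T_xL$ at each point $x\in L$, so it suffices to prove: an oriented $4$-plane $W\subseteq\R^7$ is coassociative for a suitable orientation if and only if $\varphi|_W=0$.

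To set up the linear algebra, fix such a $W$ and choose a positively oriented orthonormal basis $f_1,\dotsc,f_7$ of $\R^7$ with $W=\langle f_1,f_2,f_3,f_4\rangle$ and $W^\perp=\langle f_5,f_6,f_7\rangle$, writing $f^{ij\dotsb k}$ for the corresponding wedges of dual covectors. Since $\psi=\star\varphi$ and the Hodge star is an isometry on forms with $\star^2=\mathrm{id}$ on $\R^7$, a short computation in the basis $\{f^I\}$ gives
\begin{align}
\label{eq:hodgecomp}
\psi(f_1,f_2,f_3,f_4)=\langle\star\varphi,f^{1234}\rangle=\langle\varphi,\star f^{1234}\rangle=\langle\varphi,f^{567}\rangle=\varphi(f_5,f_6,f_7),
\end{align}
and, in the same way, $\psi(f_5,f_6,f_7,f_j)=\pm\,\varphi(f_a,f_b,f_c)$ for $j\in\{1,2,3,4\}$, where $\{a,b,c\}=\{1,2,3,4\}\setminus\{j\}$ and the (irrelevant) sign comes from the Hodge star. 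Note also that $\psi(f_5,f_6,f_7,f_j)=0$ automatically for $j\in\{5,6,7\}$ because $\psi$ is alternating.

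Now I would chain together a string of equivalences. First, $\varphi|_W=0$ holds if and only if $\varphi(f_i,f_j,f_k)=0$ for every triple $\{i,j,k\}\subseteq\{1,2,3,4\}$, which by the relations above is equivalent to $\psi(f_5,f_6,f_7,f_j)=0$ for all $j\in\{1,2,3,4\}$, hence to $\psi(f_5,f_6,f_7,v)=0$ for all $v\in\R^7$. By Proposition \ref{associativeschar} this holds precisely when $\langle f_5,f_6,f_7\rangle$ is associative for some orientation, i.e.\ when $|\varphi(f_5,f_6,f_7)|=1$. By \eqref{eq:hodgecomp} this is the same as $|\psi(f_1,f_2,f_3,f_4)|=1$, which (since a $4$-plane has only two orientations) is exactly the assertion that $W=\langle f_1,f_2,f_3,f_4\rangle$ is coassociative for a suitable orientation. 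This chain of equivalences proves the proposition.

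The only real content, beyond Proposition \ref{associativeschar}, is the bookkeeping in \eqref{eq:hodgecomp} and its analogue: one must verify that the $\binom{4}{3}=4$ components of $\varphi|_W$ match up, under $\star$, with exactly the $4$ components $\psi(f_5,f_6,f_7,f_j)$, $j\in\{1,2,3,4\}$, that feed into the associativity test --- this is the part I expect to be the most delicate, though it is routine. A more hands-on alternative for one implication is to use the transitivity of $\Gt$ on associative $3$-planes to normalize $W^\perp=\langle e_1,e_2,e_3\rangle$, and then read off $\varphi|_{\langle e_4,e_5,e_6,e_7\rangle}=0$ directly from \eqref{phi} (every term there involves an index in $\{1,2,3\}$); but the converse direction still needs the observation that $\varphi|_W=0$ forces $W^\perp$ to be associative, which is cleanest via the duality above.
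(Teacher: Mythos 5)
Your argument is correct, and it is worth noting that the paper itself does not supply a proof of Proposition \ref{coasociativeprop} at all: it simply cites \cite[Proposition IV.4.5 \& Theorem IV.4.6]{harveylawson}. So your proposal is not so much a different route as the only route on display. The substance of what you do --- reducing to a pointwise statement about $4$-planes, computing $\psi(f_1,f_2,f_3,f_4)=\langle\star\varphi,f^{1234}\rangle=\langle\varphi,f^{567}\rangle=\varphi(f_5,f_6,f_7)$ and $\psi(f_5,f_6,f_7,f_j)=\pm\varphi(f_a,f_b,f_c)$ via the self-adjointness of $\star$ (valid since $\star^2=\mathrm{id}$ in odd dimensions), and then chaining $\varphi|_W=0\iff\psi(f_5,f_6,f_7,\cdot)\equiv 0\iff W^\perp$ associative $\iff|\varphi(f_5,f_6,f_7)|=1\iff|\psi(f_1,f_2,f_3,f_4)|=1\iff W$ coassociative --- checks out at every step; the only inputs are Proposition \ref{associativeschar} and the fact that a $3$-plane (resp.\ $4$-plane) is calibrated for some orientation exactly when the calibration evaluates to $\pm1$ on an orthonormal basis. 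This has the pleasant side effect of making the coassociative characterization a formal consequence of the associative one already quoted in the paper, whereas Harvey--Lawson's own proof goes through the coassociator/cross-product identities on the octonions. Two small points to tidy up if this were to be included: state explicitly that $\{f^I\}$ is orthonormal for the induced inner product on $\Lambda^k(\R^7)^*$ so that $\langle\varphi,f^{567}\rangle=\varphi(f_5,f_6,f_7)$, and note that Proposition \ref{associativeschar} tests $\psi(V_1,V_2,V_3,e_i)$ against the standard basis, which by linearity of $\psi(V_1,V_2,V_3,\cdot)$ is equivalent to the test against your adapted basis $f_1,\dotsc,f_7$.
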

Finally, we define Cayley $4$-folds in $\R^8$. 
\begin{definition}
     Let $e_0, \dotsc, e_7$ be an orthonormal basis for $\R^8$. Let $\Phi$ be a $4$-form on $\R^8$ defined as follows:
    \begin{align}
    \label{cayleydefinition}
    \begin{split}
        \Phi = e_0 \wedge \varphi + \psi.
    \end{split}
    \end{align}
    Then $\Phi$ is a calibration \cite[Theorem IV.1.24]{harveylawson} and  submanifolds of $\R^8$ calibrated by $\Phi$ are known as \textit{Cayley $4$-folds}.
\end{definition}

With the above convention, we have \begin{align*}
    \Phi = \re(\Omega)+\frac{1}{2}\omega^2,
\end{align*}
where $(\Omega, \omega)$ is the standard $\SU(4)$-structure on $\C^4$.\\

Let us now state some relations between these calibrated submanifolds. We can obtain associative and coassociative submanifolds from special Lagrangians and complex surfaces in the following way (see \cite[Prop 6.5]{lotay2018calibratedsubmanifolds} for details):

\begin{proposition}
\label{assocprop}
Let $\R^7 = \R \times \C^3$. Consider $N = \R \times S \subseteq \R \times \C^3$. Then, 
\begin{enumerate}[(i)]
        \item  $N$ is     associative $\iff$ $S$ is a complex curve.  
        \item $N$ is coassociative $\iff $ $S$ is a special Lagrangian $3$-fold with phase $-i$. 
\end{enumerate}
If $N \subseteq \{0\} \times \C^3$, then 
\begin{enumerate}[(a)]
    \item $N$ is associative $\iff$ $N$ is a special Lagrangian $3$-fold. 
    \item $N$ is coassociative $\iff$ $N$ is a complex surface. 
\end{enumerate}
\end{proposition}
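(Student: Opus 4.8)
The plan is to decompose the defining forms $\varphi$ and $\psi$ compatibly with the splitting $\R^7=\R\times\C^3$ and then restrict them to the relevant tangent planes. First I would fix the identification: take the $\R$-factor to be $\mathrm{span}(e_1)$ and let $\C^3=\mathrm{span}(e_2,\dots,e_7)$ carry the complex structure $Je_2=e_3$, $Je_4=e_5$, $Je_6=e_7$, so that with $\omega=e_{23}+e_{45}+e_{67}$ the Kähler form and $\Omega=(e_2+ie_3)\wedge(e_4+ie_5)\wedge(e_6+ie_7)$ the holomorphic volume form, a direct expansion of \eqref{phi} and \eqref{psi} gives
\begin{align*}
    \varphi=e_1\wedge\omega+\re\Omega,\qquad \psi=\tfrac12\,\omega^2-e_1\wedge\im\Omega .
\end{align*}
These two identities are the only hand computations needed, and they are routine once the conventions are pinned down.

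For the product case $N=\R\times S$ with $S\subseteq\C^3$, I would restrict $\varphi$ (when $N$ is associative, so $\dim S=2$) or $\psi$ (when $N$ is coassociative, so $\dim S=3$) to $T_pN=\R e_1\oplus T_qS$. The key observation is that a form pulled back from $\C^3$ — here $\re\Omega$ and $\tfrac12\omega^2$ — vanishes on any plane containing $e_1$ of dimension one more than $\dim S$, because contracting with $e_1$ gives zero while $T_qS$ is then too small to fill the remaining slots. Hence $\varphi|_{T_pN}=e_1\wedge(\omega|_{T_qS})$ and $\psi|_{T_pN}=-e_1\wedge(\im\Omega|_{T_qS})$. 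Comparing with the product volume form $\vol_{T_pN}=e_1\wedge\vol_{T_qS}$ reduces the calibration condition on $N$ to $\omega|_{T_qS}=\vol_{T_qS}$ in case (i), which by Wirtinger's inequality says exactly that $S$ is a complex curve; and to $-\im\Omega|_{T_qS}=\vol_{T_qS}$ in case (ii), i.e. $S$ is calibrated by the phase-$(-i)$ special Lagrangian calibration $-\im\Omega$. For (ii) it is in fact cleaner to use Proposition \ref{coasociativeprop}: $\varphi|_{T_pN}=e_1\wedge(\omega|_{T_qS})+\re\Omega|_{T_qS}$, and since one summand carries a factor of $e_1$ and the other does not they cannot cancel, so $\varphi|_{T_pN}=0$ iff $\omega|_{T_qS}=0$ and $\re\Omega|_{T_qS}=0$, which is precisely the condition that $S$ be a special Lagrangian $3$-fold of phase $-i$.

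For the case $N\subseteq\{0\}\times\C^3$ the roles of the two pieces are reversed: now $e_1$ restricts to zero on $TN$, so the $e_1$-terms of $\varphi$ and $\psi$ drop out, leaving $\varphi|_{TN}=\re\Omega|_{TN}$ and $\psi|_{TN}=\tfrac12\omega^2|_{TN}$. Thus a $3$-dimensional $N$ is associative iff it is calibrated by $\re\Omega$, i.e. is special Lagrangian, which is (a); and a $4$-dimensional $N$ is coassociative iff it is calibrated by $\tfrac12\omega^2$, which by Wirtinger is exactly the statement that $N$ is a complex surface, which is (b).

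I expect the only genuine subtlety to be bookkeeping of orientations: every ``$\iff$'' above is to be read up to a change of orientation (for a suitable orientation of the factor $S$), exactly as in Propositions \ref{associativeschar} and \ref{coasociativeprop} and in the special Lagrangian characterization, and I would flag this convention at the outset. Relatedly, the phase in (ii) is sensitive to the choice of $J$ on $\C^3$ and to the product orientation, so I would double-check the expansion of $\varphi$ and $\psi$ against \eqref{phi}--\eqref{psi} to confirm the phase is $-i$ rather than $+i$. Everything else is a short linear-algebra verification together with two invocations of Wirtinger's theorem.
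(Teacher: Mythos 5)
Your proposal is correct and follows essentially the same route as the source the paper cites for this proposition (\cite[Prop 6.5]{lotay2018calibratedsubmanifolds}), namely the decomposition $\varphi = e_1\wedge\omega+\re\Omega$ and $\psi=\tfrac12\omega^2-e_1\wedge\im\Omega$ adapted to $\R^7=\R\oplus\C^3$ (the same splitting of $\varphi$ the paper itself uses in Section \ref{t2invariantcoass}), followed by restriction to $\R e_1\oplus T_qS$ or to $\{0\}\oplus\C^3$ and Wirtinger's inequality. Your expansion of $\re\Omega$ and $\im\Omega$ against \eqref{phi}--\eqref{psi} checks out, and the phase-$(-i)$ identification in (ii) is consistent with the stated conventions up to the orientation caveat you already flag.
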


Similarly, we have the following relations for Cayley submanifolds which follow from \eqref{cayleydefinition}  (see \cite[Prop 7.3]{lotay2018calibratedsubmanifolds} for details):
\begin{proposition}
\label{cayleyprop}
    Complex surfaces and special Lagrangian $4$-folds in $\C^4$ are Cayley in $\R^8 = \C^4$. Furthermore, if we write $\R^8 = \R \times \R^7$, then \begin{enumerate}[(i)]
        \item $\R \times S$ is Cayley $\iff$ $S$ is associative in $\R^7$.
        \item $N \subseteq\R^7$ is Cayley in $\R^8$ $\iff$ $N$ is coassociative in $\R^7$. 
    \end{enumerate}
\end{proposition}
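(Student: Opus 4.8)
The plan is to read everything off from the two descriptions of the Cayley form recorded above: the octonionic one $\Phi = e_0\wedge\varphi + \psi$ in \eqref{cayleydefinition} and the complex one $\Phi = \re(\Omega) + \tfrac12\omega^2$. In each case I would restrict $\Phi$ to the relevant tangent $4$-plane, write the answer as a scalar multiple of the volume form, and recognise when that scalar equals $1$ using the characterisations of associative and coassociative planes already available. The bound ``scalar $\le 1$'' is automatic since $\varphi$, $\psi$ and $\Phi$ are calibrations, so only the equality case needs attention.

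For the first claim I would argue pointwise on a real $4$-plane $P\subset\C^4$ using $\Phi = \re(\Omega) + \tfrac12\omega^2$. If $P$ is a complex $2$-plane, then $\tfrac12\omega^2|_P = \vol_P$ by the equality case of Wirtinger's inequality, while $\re(\Omega)|_P = 0$ because $\Omega$ has type $(4,0)$ and the complexification of $P$ carries only two holomorphic directions; hence $\Phi|_P = \vol_P$. If instead $P$ is special Lagrangian of phase $1$, then $\omega|_P = 0$, so $\omega^2|_P = 0$, while $\re(\Omega)|_P = \vol_P$ by definition; again $\Phi|_P = \vol_P$. Applying this at every point of a complex surface $S$, respectively a special Lagrangian $4$-fold $L$, shows $S$ and $L$ are calibrated by $\Phi$, i.e. Cayley.

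For $\R^8 = \R\times\R^7$, let $e_0$ span the $\R$-factor, and use that $\varphi$ and $\psi$ involve only the $\R^7$-factor. If $N = \R\times S$ with $S\subset\R^7$ a $3$-fold, the tangent space is $\R e_0\oplus T_xS$; evaluating a $4$-form on $\{e_0,v_1,v_2,v_3\}$ with $v_i\in T_xS$, the $\psi$-term vanishes and the $e_0\wedge\varphi$-term gives $\varphi(v_1,v_2,v_3)$, so $\Phi|_{T_xN} = e_0\wedge(\varphi|_{T_xS})$. Since $\varphi|_{T_xS} = c\,\vol_{T_xS}$ with $c\le 1$ and $\vol_{T_xN} = e_0\wedge\vol_{T_xS}$ for the product orientation, $N$ is Cayley iff $c = 1$ everywhere, i.e. iff $S$ is associative. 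If instead $N\subset\{0\}\times\R^7$ is a $4$-fold, then $T_xN\subset\R^7$ does not contain $e_0$, so $(e_0\wedge\varphi)|_{T_xN} = 0$ and $\Phi|_{T_xN} = \psi|_{T_xN}$; writing $\psi|_{T_xN} = c\,\vol_{T_xN}$ with $c\le1$, $N$ is Cayley iff $\psi|_{T_xN} = \vol_{T_xN}$ for all $x$, which is precisely the coassociative condition (equivalently $\varphi|_{T_xN} = 0$, by Proposition \ref{coasociativeprop}).

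The form computations are all elementary; the points that actually require care are the orientation conventions — each biconditional holds up to a change of orientation, consistent with the conventions fixed for special Lagrangians and (co)associatives earlier — and the two standard linear-algebra inputs of the second paragraph, namely the equality case of Wirtinger's inequality and the vanishing of a $(4,0)$-form on a complex $2$-plane. I expect this orientation bookkeeping, rather than any of the exterior algebra, to be the only real obstacle.
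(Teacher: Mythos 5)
Your proposal is correct and is precisely the derivation the paper has in mind: the paper states this proposition without proof, noting only that it ``follows from'' the formula $\Phi = e_0\wedge\varphi+\psi$ and citing Lotay's survey, and your argument --- restricting the two expressions $\Phi = e_0\wedge\varphi+\psi$ and $\Phi=\re(\Omega)+\tfrac12\omega^2$ to the relevant tangent $4$-planes and invoking Wirtinger's equality case, the vanishing of a $(4,0)$-form on a complex $2$-plane, and $\iota_{e_0}\psi=0$ --- is exactly that standard derivation. The only caveats are the orientation conventions, which you correctly flag and which are already absorbed into the ``up to a change of orientation'' phrasing used elsewhere in the paper.
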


\section{Method}
\label{method}

 In this section, we give a heuristic description of the technique we use to construct the calibrated submanifolds with symmetries. Since we are only dealing with special Lagrangians, associatives, coassociatives and Cayleys in Euclidean space, we take the manifold $M$ to be either $\C^n$, $\R^7$ or $\R^8$. Then, we consider an action of a Lie group $G$ on $M$ which preserves the calibration form. Let us assume that the orbits of this action are parametrized by $\theta_1, \dotsc, \theta_k \in \R$ where $k \leq n$, where $n$ is the dimension of the manifold.\\

 Let us now take  $$\alpha: (-\epsilon_1, \epsilon_1) \times \dotsb \times (-\epsilon_l, \epsilon_l) \rightarrow M$$
 to be a smooth map where $\epsilon_i > 0$. In the case of the $\bT^2$-invariant coassociatives, it is a surface $\alpha(s, t)$ in $\R^7$ whereas in the rest of the examples it is a curve $\alpha(t)$. \\

 We package the information about the group acting on $\alpha$ through the following map 
 \begin{align*}
     F: (-\epsilon_1, \epsilon_1) \times \dotsb \times (-\epsilon_l, \epsilon_l) \times  \mathbb{R}^{k} \rightarrow M,
 \end{align*}
given as
 \begin{align*}
     F(t_1, \dotsc, t_l, \theta_1, \dotsc, \theta_k) = A_{\theta_1, \dotsc, \theta_k} \cdot \alpha(t_1, \dotsc, t_l),
 \end{align*}
where $A_{\theta_1, \dotsc, \theta_k} \cdot \alpha(t_1, \dotsc, t_l)$ is just the point on the orbit through $\alpha(t_1, \dotsc, t_l)$ corresponding to the coordinates $\theta_1, \dotsc, \theta_k$ for $A_{\theta_1, \dotsc, \theta_k} \in G$.  \\

Then, we define the following vector fields along $F$:
\begin{align*}
    V_i = \frac{\partial F}{\partial \theta_i},
\end{align*}
for $1 \leq i \leq k$, and 
\begin{align*}
    V_j =  \frac{\partial F}{\partial t_j},
\end{align*}
for $k+1 \leq j \leq l$. \\

Next, we want to ensure that $F$ is an immersion, and through that process, we obtain certain necessary conditions that $\alpha$ has to satisfy. Once we impose those conditions on $\alpha$, we demand that the tangent space to the submanifold determined by $F$ be calibrated, to obtain differential equations for $\alpha$. Finally, we solve these differential equations to obtain our $G$-invariant calibrated submanifolds. 
 
\section{$\bT^{n-1}$-invariant special Lagrangian $n$-folds} \label{maxtorusSL}
In this section, we construct special Lagrangian $n$-folds which are invariant under the action of a maximal torus $\bT^{n-1}$ in $\SU(n)$ (see \cite[Chapter VII.2]{helgason1979differential} for more details on maximal tori of Lie groups) using the method described in Section \ref{method}. Harvey-Lawson  \cite[Section III.3.A]{harveylawson} previously constructed these SL $n$-folds using an implicit characterization of special Lagrangians. \\

Let $\bT^{n-1}$ be a maximal torus in $\SU(n)$ given as 
\begin{align*}
     \bT^{n-1} = \{\diag(e^{i\theta_1}, \dotsc, e^{i\theta_n}): \theta_1+ \dotsb +\theta_n = 0\}.
\end{align*}

Now, to look at the orbits of its action on $\C^n$, let us take $z \in \C^n$ satisfying $z_k \neq 0$ for all $k$ and consider the orbit $\bT^{n-1} \cdot z$. For $g \in \bT^{n-1}$, we have that if $g \cdot z = z$, then for all $k$, 
\begin{align*}
     e^{i\theta_k} z_k = z_k \implies e^{i\theta_k} = 1 \implies \theta_k = 0,
\end{align*}
as $z_k \neq 0$. That is, $g \cdot z = z \iff g = \Id$. Thus,  the orbit $\bT^{n-1} \cdot z$ of $z$ is equivariantly diffeomorphic to $\bT^{n-1}$.\\

So let us fix a curve $\alpha: (-\epsilon, \epsilon ) \rightarrow \C^n$ where $\alpha_k(t) \neq 0$ for $1 \leq k\leq n$ for all time $t$. Then the orbit of $\alpha(t)$ under $\bT^{n-1}$ is given as \begin{align*}
    \bT^{n-1}\cdot \alpha(t) =  \{(e^{i\theta_1}\alpha_1(t), \dotsc, e^{i\theta_{n-1}}\alpha_{n-1}(t),e^{i\theta_{n}}\alpha_{n}(t) ): t \in (-\epsilon, \epsilon)\}.
\end{align*}
Let us now define $F: (-\epsilon, \epsilon) \times \R^{n-1} \rightarrow \C^n$ as 
\begin{align*}
    F(t, \theta_1, \dotsc, \theta_{n-1}) = A_{\theta_1, \dotsc, \theta_{n-1} }\cdot \alpha(t),
\end{align*}
where $A_{\theta_1, \dotsc, \theta_{n-1}} = \diag(e^{i\theta_1}, \dotsc, e^{i\theta_{n-1}}, e^{-i(\theta_1+\dotsb + \theta_{n-1})})$. We now deduce conditions which ensure that $F$ is an immersion.

\begin{lemma}
    Let us assume that there exists at least one $k$ such that for all $t \in (-\epsilon, \epsilon)$, we have $\alpha_k(t) \neq e^{i\lambda t}$ for any $\lambda \in \R$. Then, the map $F$ is an immersion. That is, the vector fields 
    \begin{align*}
       V_{k} := \frac{\pr F}{\pr \theta_k} = (0, \dotsc, i e^{i\theta_k}\alpha_k, \dotsc, 0, -ie^{-i(\theta_1+\dotsb + \theta_{n-1})}\alpha_n),
    \end{align*}
and 
    \begin{align*}
       V_n := \frac{\pr F}{\pr t} = (e^{i\theta_1}\alpha_1'(t), \dotsc, e^{i\theta_{n-1}}\alpha_{n-1}'(t), e^{-i(\theta_1 +\dotsb+\theta_{n-1})}\alpha_{n}'(t)),
    \end{align*}
for $1 \leq k \leq n-1$, are linearly independent. 
\end{lemma}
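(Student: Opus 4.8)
The plan is to show that the $n$ vectors $V_1,\dots,V_{n-1},V_n$ in $\C^n$ are linearly independent by examining a linear combination $\sum_{k=1}^{n-1} c_k V_k + c_n V_n = 0$ with $c_1,\dots,c_n \in \R$, reading it off coordinate by coordinate. Since the $k$-th coordinate of $V_k$ is $ie^{i\theta_k}\alpha_k$ and every other $V_j$ with $1\le j\le n-1$ vanishes in that slot, the $k$-th component of the sum is $ic_k e^{i\theta_k}\alpha_k + c_n e^{i\theta_k}\alpha_k'$, while the $n$-th component is $-i\big(\sum_{k=1}^{n-1} c_k\big)e^{-i(\theta_1+\dots+\theta_{n-1})}\alpha_n + c_n e^{-i(\theta_1+\dots+\theta_{n-1})}\alpha_n'$. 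First I would cancel the nonvanishing phase factors to reduce the system to
\begin{align*}
    ic_k \alpha_k + c_n \alpha_k' = 0 \quad (1\le k\le n-1), \qquad -i\Big(\textstyle\sum_{k=1}^{n-1} c_k\Big)\alpha_n + c_n \alpha_n' = 0.
\end{align*}

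Next I would split into the cases $c_n = 0$ and $c_n \ne 0$. If $c_n = 0$, then $ic_k\alpha_k = 0$ for each $k\le n-1$, and since $\alpha_k(t)\neq 0$ by hypothesis this forces $c_k = 0$; the last equation then also forces the remaining relation to be trivial, so all coefficients vanish. If $c_n \ne 0$, then for each $1\le k\le n-1$ we get the ODE $\alpha_k' = -(ic_k/c_n)\alpha_k$, whose solutions are $\alpha_k(t) = \alpha_k(0)e^{-i(c_k/c_n)t}$; writing $\lambda = -c_k/c_n \in \R$ and absorbing the nonzero constant $\alpha_k(0)$ (which has unit modulus after noting $|\alpha_k|$ is forced constant), this says $\alpha_k(t)$ is of the form $(\text{const})\,e^{i\lambda t}$ for \emph{every} $k$ — and similarly the $n$-th equation forces $\alpha_n$ into the same form. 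This contradicts the hypothesis that there exists at least one index $k$ for which $\alpha_k(t)\neq e^{i\lambda t}$ for any $\lambda\in\R$, provided one is slightly careful about the role of the constant factor.

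The main obstacle, and the point I would treat most carefully, is exactly this last bookkeeping: the hypothesis is stated as $\alpha_k(t) \ne e^{i\lambda t}$, whereas the ODE naturally produces $\alpha_k(t) = \alpha_k(0)e^{i\lambda t}$ with a generally complex constant $\alpha_k(0)$. I would reconcile this by observing that the differential equation $\alpha_k' = i\lambda \alpha_k$ already implies $\tfrac{d}{dt}|\alpha_k|^2 = 2\,\mathrm{Re}\langle \alpha_k', \alpha_k\rangle = 2\,\mathrm{Re}(i\lambda|\alpha_k|^2) = 0$, so $|\alpha_k|$ is constant; after the (harmless) normalization implicit in the setup one may take this constant to be $1$, so that $\alpha_k(0) = e^{i\mu}$ for some phase $\mu$ and hence $\alpha_k(t) = e^{i(\lambda t + \mu)}$, which after a reparametrization or shift of $\alpha$ is of the excluded form. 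Thus in the case $c_n\ne 0$ every coordinate $\alpha_k$ would have to be a pure phase rotating at constant speed, contradicting the hypothesis, and we conclude $c_n = 0$, whence all $c_k = 0$ and the vector fields are linearly independent, so $F$ is an immersion. $\qquad\blacksquare$
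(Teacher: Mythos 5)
Your proof is correct and follows essentially the same route as the paper: set a real linear combination of the $V_i$ to zero, cancel the unit phase factors coordinate by coordinate, and observe that a nonzero coefficient on $V_n$ forces each $\alpha_k$ to satisfy $\alpha_k' = i\lambda_k \alpha_k$, contradicting the hypothesis. The bookkeeping about the constant $\alpha_k(0)$ that you labor over is glossed in exactly the same way in the paper, which simply identifies the hypothesis ``$\alpha_k(t)\neq e^{i\lambda t}$'' with the condition $\alpha_k' \neq i\lambda\alpha_k$; the intended reading is that $\alpha_k$ is not of the form $Ce^{i\lambda t}$, so your appeal to a normalization making $|\alpha_k(0)|=1$ is unnecessary (and not actually available in the setup), but this does not affect the substance of the argument.
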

\begin{proof}
    Note that the $V_k$ are linearly independent for $1 \leq k \leq n-1$.
For the rest, if we take $b, c_k \in \R$ and \begin{align*}
    \sum_k c_k \frac{\pr F}{\pr \theta_k} + b \frac{\pr F}{\pr t} = 0,
\end{align*}
then if $b = 0$, we get that $c_k = 0$ for all $k$. But if $b \neq 0$, then 
\begin{align*}
    \alpha'_k = -i\frac{c_k}{b} \alpha_k
\end{align*}
and \begin{align*}
    \alpha'_n = -i\frac{(c_1 +\dotsb+c_{n-1})}{b} \alpha_n.
\end{align*}
So if $b \neq 0$, then for all $k \in 1, \dotsc, n-1$ we have $\alpha'_k = i \lambda_k \alpha_k$ for some $\lambda_k \in \R$. So if we assume that at least one $k$ is such that $\alpha'_k \neq i \lambda \alpha_k \implies \alpha_k(t) \neq e^{i\lambda t}$ for any $\lambda \in \R$ and for all $t \in (-\epsilon, \epsilon)$, then we get linear independence for $\{V_1, \dotsc, V_n\}$ and thus $F$ is an immersion in such case. 
\end{proof}

Let us set $\alpha_k(t) = u_k(t) + iv_k(t)$. Note that we have the expansion
\begin{align*}
    ie^{i\theta_k}\alpha_k 
 &= i(\cos\theta_k + i \sin\theta_k)(u_k + iv_k)\\ &= (-v_k\cos\theta_k -u_k\sin\theta_k) + i(u_k\cos\theta_k -v_k\sin\theta_k ).
\end{align*}
Hence, we can write the vector fields as 
\begin{align*}
    V_k  &= (-v_k\cos\theta_k -u_k\sin\theta_k)\frac{\pr}{\pr x^k} + (u_k \cos\theta_k -v_k\sin\theta_k )\frac{\pr}{\pr y^k}\\ &\quad+(v_n\cos\theta_n+u_n\sin\theta_n)\frac{\pr}{\pr x^n}
    +(-u_n\cos\theta_n+v_n\sin\theta_n) \frac{\pr}{\pr y^n},
\end{align*}
and 
\begin{align*}
    V_n &= (u'_1 \cos\theta_1 -v'_1\sin\theta_1)\frac{\pr}{\pr x^1} + (u'_1\sin\theta_1 + v'_1\cos\theta_1)\frac{\pr}{\pr y^1} \\
    &\quad +\dotsb + (u'_{n} \cos\theta_n -v'_{n}\sin\theta_n)\frac{\pr}{\pr x^n} + (u'_{n}\sin\theta_n + v'_{n}\cos\theta_n)\frac{\pr}{\pr y^n}.
\end{align*}
\begin{proposition}
\label{tn-1sl}
    The subset formed by $(\alpha_1, \dotsc, \alpha_n) \in \C^n$ such that for $1 \leq k \leq n-1$, 
    \begin{align*}
        |\alpha_k|^2 - |\alpha_n|^2 = c_k
    \end{align*}
    for some $c_k \in \R$, and 
    \begin{align*}
    \begin{cases}
        \im(\alpha_1 \dotsb \alpha_n) = c, & \text{if $n$ is odd,}\\
        \re(\alpha_1 \dotsb \alpha_n) = c, & \text{if $n$ is even,}
    \end{cases}
    \end{align*}
    for some $c \in \R$, forms a special Lagrangian submanifold in $\C^n$. 
\end{proposition}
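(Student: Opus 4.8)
The plan is to verify directly the two conditions of the characterization of special Lagrangians recalled above, namely $\omega|_L = 0$ and $\im \Omega|_L = 0$, where $L$ denotes the subset in the statement. First I would note that the defining equations are $\bT^{n-1}$-invariant: the functions $|\alpha_k|^2 - |\alpha_n|^2$ are unchanged by the phase rotations making up $\bT^{n-1}$, and under the action $\alpha_1 \dotsb \alpha_n$ is multiplied by $e^{i(\theta_1 + \dotsb + \theta_n)} = 1$, so its real and imaginary parts are invariant as well. Hence $L$ is a union of $\bT^{n-1}$-orbits, and away from a lower-dimensional degenerate locus it is locally the image of the map $F(t, \theta_1, \dotsc, \theta_{n-1}) = A_{\theta_1, \dotsc, \theta_{n-1}} \cdot \alpha(t)$ of Section \ref{maxtorusSL}, with $\alpha$ a curve inside $L$ transverse to the orbits; by the Lemma of that section $F$ is then an immersion and $TL$ is spanned by $V_1, \dotsc, V_n$.

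For the Lagrangian condition I would use the formula $\omega(X, Y) = \sum_{k=1}^n \im(\overline{X^k}\, Y^k)$, where $X^k$ is the $k$-th complex component of $X$. For $1 \leq j \neq k \leq n-1$ the fields $V_j$ and $V_k$ have nonzero complex components only in slots $j$, $k$ and $n$, and their $n$-th components coincide, so $\omega(V_j, V_k) = 0$ identically. The remaining pairings are $\omega(V_k, V_n)$ for $1 \leq k \leq n-1$; using $\re(\overline{\alpha_k}\, \alpha_k') = \tfrac12 \tfrac{d}{dt}|\alpha_k|^2$ one gets $\omega(V_k, V_n) = -\tfrac12 \tfrac{d}{dt}\big(|\alpha_k|^2 - |\alpha_n|^2\big)$, which vanishes along $L$ exactly when $|\alpha_k|^2 - |\alpha_n|^2$ is constant. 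This is the first family of equations in the statement.

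For the special condition I would compute $\Omega(V_1, \dotsc, V_n)$ as the determinant of the $n \times n$ matrix whose columns are the complex components of the $V_i$. Pulling $e^{i\theta_j}$ out of the $j$-th row contributes a total factor $e^{i(\theta_1 + \dotsb + \theta_n)} = 1$, leaving a sparse matrix in which column $V_k$ ($k < n$) has $i\alpha_k$ on the diagonal and $-i\alpha_n$ in the last row, while the last column $V_n$ has entries $\alpha_j'$ down rows $j = 1, \dotsc, n$. A cofactor expansion along the last column collapses everything to $\Omega(V_1, \dotsc, V_n) = i^{n-1}(\alpha_1 \dotsb \alpha_n)'$. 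Since $i^{n-1}$ is real for $n$ odd and purely imaginary for $n$ even, taking imaginary parts gives $\im \Omega|_L = \pm \tfrac{d}{dt}\im(\alpha_1 \dotsb \alpha_n)$ when $n$ is odd and $\pm \tfrac{d}{dt}\re(\alpha_1 \dotsb \alpha_n)$ when $n$ is even; this vanishes precisely under the second equation of the statement. Together with the Lagrangian condition, the characterization of special Lagrangians then gives the conclusion, possibly after reversing orientation.

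The routine but delicate part is the cofactor expansion: one has to track the signs across the $n-1$ terms coming from rows $j < n$ and the single term from row $j = n$ so that they assemble into the clean expression $i^{n-1}(\alpha_1 \dotsb \alpha_n)'$, and then follow the power $i^{n-1}$ carefully through the parity split to land on $\im$ versus $\re$. A minor additional point worth a sentence is the degenerate locus where $F$ fails to be an immersion — all $\alpha_k$ of the form $e^{i\lambda_k t}$ — which is lower-dimensional and hence irrelevant to the claim that the whole subset is special Lagrangian.
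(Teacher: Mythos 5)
Your proposal is correct and follows essentially the same route as the paper: evaluate $\omega$ and $\im\Omega$ on the frame $V_1,\dotsc,V_n$, obtain $\omega(V_j,V_k)=0$, $\omega(V_k,V_n)=-\tfrac12\tfrac{d}{dt}(|\alpha_k|^2-|\alpha_n|^2)$ and $\Omega(V_1,\dotsc,V_n)=i^{n-1}(\alpha_1\dotsb\alpha_n)'$, then split by the parity of $n$. The only differences are cosmetic — you carry out the symplectic computation in complex notation where the paper expands in real coordinates, and you add a (welcome) remark on orbit-invariance and the immersion locus.
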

\begin{proof}

The $2$-form $\omega$ is 
\begin{align*}
    \omega = dx^1 \wedge dy^1 + \dotsb+ dx^n \wedge dy^n.
\end{align*}
For $k \neq l$, we have \begin{align*}
    \omega(V_k, V_l) &= (dx^n \wedge dy^n)(V_k, V_l)\\
    &= dx^n(V_k) dy^n(V_l) - dx^n(V_l) dy^n(V_k)\\
    &= (v_n\cos\theta_n+u_n\sin\theta_n)(-u_n\cos\theta_n+v_n\sin\theta_n) \\&\quad- (v_n\cos\theta_n+u_n\sin\theta_n)(-u_n\cos\theta_n+v_n\sin\theta_n)= 0
\end{align*}
and 
\begin{align*}
    \omega(V_k, V_n) &= (dx^k\wedge dy^k + dx^n \wedge dy^n)(V_k, V_n)\\
    &= -(v_k\cos\theta_k +u_k\sin\theta_k)(u'_{k}\sin\theta_k + v'_{k}\cos\theta_k)\\&\quad - (u_k\cos\theta_k -v_k\sin\theta_k )(u'_k \cos\theta_k -v'_{k}\sin\theta_k)\\
    &\quad+ (v_n\cos\theta_n+u_n\sin\theta_n)(u'_{n}\sin\theta_n + v'_{n}\cos\theta_n)\\&\quad+(u_n\cos\theta_n-v_n\sin\theta_n)(u'_{n} \cos\theta_n -v'_{n}\sin\theta_n)\\
    &= -v_kv'_k -u_ku'_k +v_n v'_n + u_nu'_n.
\end{align*}
As $|\alpha_k|^2 = u_k^2+v_k^2$, we have that 
\begin{align*}
    F^*\omega = 0 &\iff  (|\alpha_k|^2 )'- (|\alpha_n|^2)' = 0\\
    &\iff |\alpha_k|^2 = |\alpha_n|^2 +c_k,
\end{align*}
for some $c_k \in \R$ for each $k = 1, \dotsc, n-1$. Now, to get the condition for this Lagrangian submanifold of $\C^n$ to be ``special" first note that 
\begin{align*}
    (dz^k)(V_k) &= i e^{i \theta_k} \alpha_k,\\
    (dz^n)(V_k) &= -i e^{i\theta_n}\alpha_n,\\
    (dz^k)(V_n) &= e^{i\theta_k}\alpha'_k, \\
    (dz^n) (V_n) &= e^{i \theta_n}\alpha'_n. 
\end{align*}
Therefore,  as $\theta_1 \dotsb + \theta_n = 0$, from above we have 
\begin{align*}
     (dz^1 \wedge \dotsb \wedge dz^n)(V_1, \dotsc, V_n) &= i^{n-1}\alpha_1\dotsb\alpha'_n +\sum_{k = 1}^{n-1} i^{n-1} \alpha_1 \dotsb \alpha'_k\dotsb \alpha_n \\  
     &= \sum_{k = 1}^{n} i^{n-1} \alpha_1 \dotsb \alpha'_k\dotsb \alpha_n .
\end{align*}
If $n$ is odd, then $i^{n-1} = \pm 1$ and hence 
\begin{align*}
    (\im(dz^1 \wedge \dotsb \wedge dz^n))(V_1, \dotsc, V_n) = 0 &\iff \im\left( \sum_{k = 1}^{n} \alpha_1 \dotsb \alpha'_k\dotsb \alpha_n\right) = 0\\
    &\iff \im(\alpha_1 \dotsb \alpha_n)' = 0\\
    &\iff \im(\alpha_1 \dotsb \alpha_n) = c,
\end{align*}
for some $c \in \R$. If $n$ is even, then 
$i^{n-1} = \pm i$ and hence 
\begin{align*}
    (\im(dz^1 \wedge \dotsb \wedge dz^n))(V_1, \dotsc, V_n) = 0 &\iff \re\left( \sum_{k = 1}^{n} \alpha_1 \dotsb \alpha'_k\dotsb \alpha_n\right) = 0\\
    &\iff \re(\alpha_1 \dotsb \alpha_n)' = 0\\
    &\iff \re(\alpha_1 \dotsb \alpha_n) = c,
\end{align*}
for some $c \in \R$.
\end{proof}
\section{$\SO(n)$-invariant special Lagrangian $n$-folds}
\label{soninvariant}

We now construct special Lagrangian $n$-folds invariant under the diagonal action of $\SO(n) \subset \SU(n)$ using this technique. Like the SL $n$-fold in the previous section, these submanifolds were also previously constructed by Harvey-Lawson \cite[Section III.3.B]{harveylawson} where they used an explicit characterization of special Lagrangians in terms of the graph of a function. \\

The diagonal action of $\SO(n) \subset \SU(n)$ on $\C^{n} = \R^n \oplus \R^n$ is given as \begin{align*}
    \begin{bmatrix}
        A & 0 \\
        0 & A 
    \end{bmatrix},
\end{align*}
where $A \in \SO(n)$. Fix $z = x+i\lambda x \in \C^n$ for $x \in \R^n$ and $\lambda \in \R$. If $z \neq 0$, then we have that the orbit $\SO(n) \cdot z$ is equivariantly diffeomorphic to $S^{n-1}$. That is, \begin{align*}
    \SO(n)\cdot (x, \lambda x) = \{(Ax, \lambda Ax): A \in \SO(n)\}
\end{align*}
where each of the above factors is equivariantly diffeomorphic to $S^{n-1}_{|x|}$ and $S^{n-1}_{\lambda|x|}$, where the subscript denotes a radius of $|x|$ and $\lambda |x|$ respectively. Now, let $\alpha = u+iv : (-\epsilon, \epsilon) \rightarrow \C^n$ be a smooth curve in $\C^n$ and suppose $\alpha(t) \neq 0$ for all $t$. Then, let us define $F: (-\epsilon, \epsilon) \times S^{n-1} \rightarrow \C^n$ as 
\begin{align*}
    F(t,p) = (|u(t)|p, |v(t)|p).
\end{align*}
We next give a condition for $F$ to be an immersion.
\begin{lemma}
    Let us assume that both $u(t)$ and $v(t)$ are not tangent to $S^{n-1}$ for all $t \in (-\epsilon, \epsilon)$. Then, $F$ is an immersion. 
\end{lemma}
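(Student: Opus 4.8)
The plan is to compute the differential of $F$ directly in the natural product frame on $(-\epsilon,\epsilon)\times S^{n-1}$ and to show that it has trivial kernel at every point; since $\dim\big((-\epsilon,\epsilon)\times S^{n-1}\big)=n$ while $\dim_{\R}\C^n=2n$, this injectivity is exactly the assertion that $F$ is an immersion.

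First I would unwind the hypothesis. To say that $u(t)$ is not tangent to $S^{n-1}$ means that $u'(t)$ has a nonzero radial component at $u(t)$, i.e. $\langle u(t),u'(t)\rangle\neq 0$; in particular this requires $u(t)\neq 0$ (one cannot speak of tangency to a sphere at its centre), so $r(t):=|u(t)|$ is a smooth positive function and $r'(t)=\langle u(t),u'(t)\rangle/|u(t)|\neq 0$ for all $t$. Likewise $\rho(t):=|v(t)|$ is smooth and positive with $\rho'(t)\neq 0$ for all $t$. In particular $F(t,p)=\big(r(t)\,p,\ \rho(t)\,p\big)$ is a smooth map.

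Next I would write down $dF$ at a point $(t,p)$. A tangent vector to $(-\epsilon,\epsilon)\times S^{n-1}$ there is a pair $(a,w)$ with $a\in\R$ and $w\in T_pS^{n-1}=p^{\perp}\subset\R^n$, and a short computation gives
\[
dF_{(t,p)}(a,w)=\big(a\,r'(t)\,p+r(t)\,w,\ a\,\rho'(t)\,p+\rho(t)\,w\big)\in\R^n\oplus\R^n.
\]
Suppose $dF_{(t,p)}(a,w)=0$. Decomposing $\R^n=\R p\oplus p^{\perp}$ and using $w\perp p$: the component of the first slot along $p$ reads $a\,r'(t)=0$, so $a=0$ because $r'(t)\neq 0$; the component of the first slot in $p^{\perp}$ then reads $r(t)\,w=0$, so $w=0$ because $r(t)\neq 0$. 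Hence $dF_{(t,p)}$ is injective, and since $(t,p)$ was arbitrary, $F$ is an immersion.

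I do not anticipate a genuine obstacle here. The only step needing care is the first one: translating the geometric phrase ``$u(t)$ not tangent to $S^{n-1}$'' into the analytic condition $\tfrac{d}{dt}|u(t)|\neq 0$, and observing that this also forces $u$ to be nowhere zero, which is what makes $r(t)=|u(t)|$ smooth in the first place; the same comments apply to $v$. After that, the immersion property is the one-line rank computation above. It is perhaps worth remarking that the rank computation only uses the hypothesis on $u$ (the identical argument run with $\rho$ in place of $r$ gives a second proof), the role of the hypothesis on $v$ being merely to guarantee that $\rho$ — hence $F$ — is smooth.
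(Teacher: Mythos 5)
Your proof is correct and follows essentially the same route as the paper: both reduce to the block (radial/tangential) decomposition of the differential, with the hypothesis supplying the nonvanishing radial entry $\tfrac{d}{dt}|u(t)|=\langle u(t),u'(t)\rangle/|u(t)|\neq 0$, the paper merely packaging this by factoring each component $F_i$ through $\R^+\times S^{n-1}\cong\R^n$ and exhibiting the same block matrix. Your closing remark that the hypothesis on $u$ alone already gives injectivity of $dF$ (the hypothesis on $v$ being needed only so that $|v(t)|$, and hence $F$, is smooth) is a small sharpening that the paper's proof, which invokes injectivity of both $(F_1)_*$ and $(F_2)_*$, does not make explicit.
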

\begin{proof}
    Let us denote $\alpha(t) = (\alpha_1(t), \alpha_2(t))$ where $\alpha_1 = u$ and $\alpha_2 = v$. Then, we have 
    \begin{align*}
        F = F_1 \oplus F_2,
    \end{align*}
    where $F_i = |\alpha_i(t)|p$. Note that each $F_i$ can be thought of as a map 
        \[\begin{tikzcd}
    	{(-\epsilon, \epsilon) \times S^{n-1}} & {\R^+ \times S^{n-1} } & {\R^n}\setminus \{0\}
    	\arrow["h", from=1-1, to=1-2]
    	\arrow["\cong", from=1-2, to=1-3]
        \end{tikzcd}\] 
        given as \begin{align*}
                (t, p) \mapsto (|\alpha_i (t)|, p) \mapsto |\alpha_i(t)|p.
        \end{align*}
        Then, the differential of $h$ is given as 
        \begin{align*}  
           h_* = \left[\begin{array}{c|c}
           \frac{1}{|\alpha_i(t)|}\alpha_i(t) \cdot \alpha'_i(t) & 0 \\
           \hline
                0 & |\alpha_i(t)| \Id  \\
            \end{array}\right].
        \end{align*}
        Hence, if $\alpha_i$ is not tangent to $S^{n-1}$, then $ \frac{1}{|\alpha_i(t)|}\alpha_i(t) \cdot \alpha'_i(t) \neq 0$ and thus $h_*$ is injective. Therefore, since each $(F_i)_*$ is injective, $F$ is an immersion.      
\end{proof}

Now, let $e_1, \dotsc, e_{n-1}$ be an orthonormal frame for $S^{n-1}$ at $p \in S^{n-1}$. With respect to the coordinate frame on $\R^n$, let us write $e_k = \sum_i c_{ki} \pr_i$ for some $c_{ki} \in \R$. Then, we have \begin{align*}
    (F_*)(e_k) = (|u(t)| e_k, |v(t)| e_k)
\end{align*}
and 
\begin{align*}   
    (F_*)\left(\frac{\pr}{\pr t}\right) = \left(\frac{u(t)}{|u(t)|} \cdot u'(t)p, \frac{v(t)}{|v(t)|} \cdot v'(t)p\right).
\end{align*}
Let $V_k := (F_*)(e_k) $ for $k = 1, \dotsc, n-1$ and let $V_n := (F_*)\left(\frac{\pr}{\pr t}\right)$. Note that \begin{align*}
    V_k = \left(c_{k1}|u(t)| \frac{\pr}{\pr x_1} + c_{k1}|v(t)| \frac{\pr}{\pr y_1}\right) + \dotsb + \left(c_{kn}|u(t)| \frac{\pr}{\pr x_n} + c_{kn}|v(t)| \frac{\pr}{\pr y_n}\right)
\end{align*} 
and 
\begin{align*}
    V_n &= \left(\frac{u(t)}{|u(t)|}\cdot u'(t)x_1\frac{\pr}{\pr x_1 }+ \frac{v(t)}{|v(t)|}\cdot v'(t)y_1\frac{\pr}{\pr y_1 }\right)+\dotsb\\&\quad\dotsb+ \left(\frac{u(t)}{|u(t)|}\cdot u'(t)x_n\frac{\pr}{\pr x_n}+ \frac{v(t)}{|v(t)|}\cdot v'(t)y_n\frac{\pr}{\pr y_n }\right).
\end{align*}
\\
\begin{proposition}
    Let $\alpha(t) = u(t) + iv(t)$ be a curve as defined above. Then, if $c \in \R$, the set
    \begin{align*}
       \{ \alpha(t) \in \C^n:  \im((|u(t)|  +i|v(t)|)^{n}) = c\}
    \end{align*}
    forms a special Lagrangian submanifold in $\C^n$.    
\end{proposition}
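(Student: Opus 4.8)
The plan is to mimic the two-step argument in the proof of Proposition~\ref{tn-1sl}: verify the Lagrangian condition $F^{*}\omega=0$, then the ``special'' condition $F^{*}(\im\Omega)=0$, both evaluated on the explicit frame $V_1,\dots,V_n$ produced above. Write $r(t)=|u(t)|$ and $\rho(t)=|v(t)|$; these are smooth wherever $u(t),v(t)\neq 0$, with $r'=(u\cdot u')/|u|$ and $\rho'=(v\cdot v')/|v|$. In the coordinate frame, $V_k$ (for $1\le k\le n-1$) has $x_j$-component $r\,c_{kj}$ and $y_j$-component $\rho\,c_{kj}$, where $e_k=\sum_j c_{kj}\,\partial_j$, while $V_n$ has $x_j$-component $r'p_j$ and $y_j$-component $\rho'p_j$, where $p=(p_1,\dots,p_n)\in S^{n-1}$.

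First I would compute $F^{*}\omega$ with $\omega=\sum_j dx_j\wedge dy_j$. For distinct $k,l\le n-1$ one gets $\omega(V_k,V_l)=r\rho\sum_j(c_{kj}c_{lj}-c_{lj}c_{kj})=0$, and for $k\le n-1$,
\begin{align*}
    \omega(V_k,V_n)=(r\rho'-r'\rho)\sum_j c_{kj}p_j=(r\rho'-r'\rho)\,(e_k\cdot p)=0,
\end{align*}
since $e_k$ is tangent and $p$ is the outward unit normal to $S^{n-1}$ at $p$. Hence $F^{*}\omega=0$ holds \emph{identically}, with no constraint on $\alpha$ --- this is exactly where the $\SO(n)$ construction differs from the $\bT^{n-1}$ one, in which the Lagrangian condition forced the relations $|\alpha_k|^2-|\alpha_n|^2=c_k$. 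As the domain of $F$ has dimension $n$, wherever $F$ is an immersion --- by the preceding Lemma, whenever $u$ and $v$ are not tangent to $S^{n-1}$ --- its image is automatically Lagrangian.

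Next I would compute $\Omega(V_1,\dots,V_n)=\det\big[\,dz^j(V_i)\,\big]$. Since $dz^j(V_k)=(r+i\rho)\,c_{kj}$ for $k\le n-1$ and $dz^j(V_n)=(r'+i\rho')\,p_j$, pulling the scalars out of the rows gives
\begin{align*}
    \Omega(V_1,\dots,V_n)=(r+i\rho)^{\,n-1}\,(r'+i\rho')\,\det C,
\end{align*}
where $C$ is the real $n\times n$ matrix with rows $e_1,\dots,e_{n-1},p$; as $\{e_1,\dots,e_{n-1},p\}$ is an orthonormal basis of $\R^n$, $\det C=\pm1$, and we fix orientations so that $\det C=+1$. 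Now $(r+i\rho)^{n-1}(r'+i\rho')=\frac{1}{n}\frac{d}{dt}\big[(r+i\rho)^n\big]=\frac{1}{n}\frac{d}{dt}\big[(|u|+i|v|)^n\big]$, hence
\begin{align*}
    \im\,\Omega(V_1,\dots,V_n)=\pm\frac{1}{n}\,\frac{d}{dt}\,\im\big((|u(t)|+i|v(t)|)^n\big).
\end{align*}
Since any real change of frame multiplies $\Omega$ by a real determinant, $\im\Omega$ vanishes on the whole tangent space if and only if it vanishes on $V_1,\dots,V_n$, that is, if and only if the derivative above vanishes, that is, if and only if $\im\big((|u|+i|v|)^n\big)\equiv c$ for some constant $c\in\R$. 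Combined with $F^{*}\omega=0$, the characterization \cite[Cor.~III.1.11]{harveylawson} recalled in Section~\ref{calibrations} then shows the image is a special Lagrangian submanifold, as claimed.

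I expect the only genuinely delicate points to be bookkeeping: justifying the row-factorization of the determinant together with the orthonormality claim $\det C=\pm1$, and pinning down the orientation of the frame $V_1,\dots,V_n$ so that the calibrated orientation is the one for which $\re\Omega$ (rather than $-\re\Omega$) restricts to the volume form --- precisely the ``up to a change of orientation'' clause in the SL characterization. The analytic hypotheses ($u,v\neq 0$, and $F$ an immersion) are already supplied by the preceding Lemma and need only be recalled.
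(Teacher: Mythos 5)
Your proposal is correct and follows essentially the same route as the paper: verify $F^{*}\omega=0$ identically using $e_k\cdot p=0$, then factor the determinant $\det[dz^j(V_i)]$ as $(|u|+i|v|)^{n-1}(|u|+i|v|)'$ times a nonvanishing real determinant, and recognize the remaining factor as $\tfrac{1}{n}\tfrac{d}{dt}(|u|+i|v|)^n$. The only cosmetic difference is that you justify the nonvanishing of the real determinant via orthonormality of $\{e_1,\dots,e_{n-1},p\}$ rather than by identifying it as a change-of-basis matrix, which is equivalent.
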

\begin{proof}

For $1 \leq k \neq l \leq n$, we have 
\begin{align*}
   \omega(V_k, V_l) &= (dx^1 \wedge dy^1 + \dotsb + dx^n \wedge dy^n)(V_k, V_l)\\
   &= (c_{k1}|u(t)|c_{l1}|v(t)| - c_{k1}|v(t)|c_{l1}|u(t)|) + \dotsb 
   \\&\quad+ (c_{kn}|u(t)|c_{ln}|v(t)| - c_{kn}|v(t)|c_{ln}|u(t)|)\\
   &= 0,
\end{align*}
and note that since from our setup above, we have the correspondence between coordinates on $\R^n$ and $\R^n \oplus \R^n$ given as $(s_1, \dotsc, s_n) \mapsto (x_1, \dotsc, x_n, y_1, \dotsc, y_n)$ where $x_i = y_i = s_i$, we have 
\begin{align*}
    \omega(V_k, V_n) &= \left(c_{k1}|u(t)|\frac{v(t)}{|v(t)|}\cdot v'(t)y_1-c_{k1}|v(t)|\frac{u(t)}{|u(t)|}\cdot u'(t)x_1\right) + \dotsb \\&\quad\dotsb+ \left(c_{kn}|u(t)|\frac{v(t)}{|v(t)|}\cdot v'(t)y_n-c_{kn}|v(t)|\frac{u(t)}{|u(t)|}\cdot u'(t)x_n\right)\\
    &= \left(|u(t)|\frac{v(t)}{|v(t)|}\cdot v'(t)-|v(t)|\frac{u(t)}{|u(t)|}\cdot u'(t)\right)(\sum_{i} c_{ki}s_i).
\end{align*}

 As $e_1, \dotsc, e_{n-1}$ are perpendicular to $p$, we have that $e_k \cdot p = \sum_i c_{ki} s_i = 0$. Hence, we get 
 \begin{align*}
    \omega(V_k, V_n) = 0.
\end{align*}
Now, we want to check the ``special" condition. We have for $1 \leq j \leq n-1,$
\begin{align*}
    dz^k(V_j) &= (dx^k + idy^k)\bigg(\left(c_{j1}|u(t)| \frac{\pr}{\pr x_1} + c_{j1}|v(t)| \frac{\pr}{\pr y_1}\right) +\dotsb\\&\quad\dotsb+ \left(c_{jn}|u(t)| \frac{\pr}{\pr x_n} + c_{jn}|v(t)| \frac{\pr}{\pr y_n}\right)\bigg)\\
    &= c_{jk}(|u(t)|  +i|v(t)|) 
\end{align*}
and \begin{align*}
    dz^k(V_n) &= (dx^k + idy^k)\bigg(\left(\frac{u(t)}{|u(t)|}\cdot u'(t)x_1\frac{\pr}{\pr x_1 }+ \frac{v(t)}{|v(t)|}\cdot v'(t)y_1\frac{\pr}{\pr y_1 }\right)+\dotsb\\&\quad\dotsb+ \left(\frac{u(t)}{|u(t)|}\cdot u'(t)x_n\frac{\pr}{\pr x_n}+ \frac{v(t)}{|v(t)|}\cdot v'(t)y_n\frac{\pr}{\pr y_n }\right)\bigg)\\
    &= \frac{u(t)}{|u(t)|}\cdot u'(t)x_k+ i \frac{v(t)}{|v(t)|}\cdot v'(t)y_k\\
    &= \left( \frac{u(t)}{|u(t)|}\cdot u'(t)+i \frac{v(t)}{|v(t)|}\cdot v'(t)\right)s_k.
    \end{align*}

     Then, we have \begin{align*}
        (dz_1 \wedge \dotsb \wedge dz_n)(V_1, \dotsc, V_n) &= \det(dz^k(V_j))\\
        &= \det\left(\begin{bmatrix}
            c_{jk}(|u(t)|  +i|v(t)|) & \left( \frac{u(t)}{|u(t)|}\cdot u'(t)+i \frac{v(t)}{|v(t)|}\cdot v'(t)\right)s_k
    \end{bmatrix}\right)\\
    &= \det\left(\begin{bmatrix}
            c_{jk}(|u(t)|  +i|v(t)|) & \left( (|u(t)|  +i|v(t)|)' \right)s_k
    \end{bmatrix}\right)\\
    &= (|u(t)|  +i|v(t)|)^{n-1} (|u(t)|  +i|v(t)|)'\det\left(\begin{bmatrix}
            C &\mid & \vec{s}
    \end{bmatrix}\right),
    \end{align*}
   where $C$ is the matrix with the entries $c_{jk}$ and $\vec{s}$ is the vector with entries $s_k$. Note that $(\begin{bmatrix}
            C &\mid & \vec{s}
    \end{bmatrix})$ is the change of basis matrix from the coordinate basis for $\R^n$ to the basis $$e_1, \dotsc, e_{n-1}, p = \sum_l x_l\frac{\pr}{\pr x_l}.$$ Hence, we have $\det(\begin{bmatrix}
            C &\mid & \vec{s}
    \end{bmatrix}) \neq 0$. Thus, we get that the $V_i$ span a special Lagrangian subspace iff \begin{align*}       &\quad\quad\quad\im((dz_1 \wedge \dotsb \wedge dz_n)(V_1, \dotsc, V_n)) = 0 \\&\iff \im((|u(t)|  +i|v(t)|)^{n-1} (|u(t)|  +i|v(t)|)') = 0\\
        &\iff \im(((|u(t)|  +i|v(t)|)^{n}))' = 0\\
        &\iff \im((|u(t)|  +i|v(t)|)^{n}) = c,
    \end{align*}
for some constant $c \in \R$. 
\end{proof}

\section{$\mathbb{T}^2$-invariant associative $3$-folds}
\label{t2invariantassoc}
We now consider the action of a maximal torus in $\Gt$ on $\R^7$ in this section and show that a $\mathbb{T}^2$-invariant associative $3$-fold must be a special Lagrangian $3$-fold in $\C^3$. Since $\SU(3) \subset \Gt$, and both have rank two, any maximal torus for $\SU(3)$ is also a maximal torus for $\Gt$. Thus, with respect to the decomposition $\R^7 = \R \oplus \C^3$, we know that the elements $X$ in the Lie algebra of a maximal torus $\mathbb{T}^2 \subset \Gt$ are of the form 
\begin{align*}
    X_{\theta_1, \theta_2} = 
    \begin{bmatrix}
        0 & & & & & & \\
        & 0& -\theta_1& & & \\
        &\theta_1 &0 & & & \\
         & & & 0 &-\theta_2 & \\
         & & & \theta_2&0 & &\\
         & & &  & &0&-\theta_3\\
         & & &  & &\theta_3&0\\
    \end{bmatrix}
\end{align*}
where $\theta_1, \theta_2, \theta_3 \in \R$ and $\theta_1 + \theta_2 + \theta_3 = 0$. Note that as 
\begin{align*}
    \exp\left({\begin{bmatrix}
        0 & -\theta\\
        \theta & 0 \\
    \end{bmatrix}}\right) = \begin{bmatrix}
        \cos\theta & -\sin\theta\\
        \sin\theta & \cos\theta
    \end{bmatrix},
\end{align*}
we have that the exponential of $X_{\theta_1, \theta_2}$ is given as
\begin{align*}
    e^{X_{\theta_1, \theta_2}} = \begin{bmatrix}
        1 & & & & & & \\
        & \cos\theta_1&-\sin\theta_1 & & &\\
        &\sin\theta_1 &\cos\theta_1 & & & \\
         & & & \cos\theta_2 &-\sin\theta_2 & \\
         & & & \sin\theta_2&\cos\theta_2 & &\\
         & & &  & &\cos\theta_3&-\sin\theta_3\\
         & & &  & &\sin\theta_3&\cos\theta_3\\
    \end{bmatrix}.
\end{align*}
Let $\alpha: (-\epsilon, \epsilon) \rightarrow \R^7$ be a smooth curve in $\R^7$. The orbit of $\alpha(t)$ is then given as \begin{align*}
    e^{X_{\theta_1, \theta_2}}\cdot \alpha(t) &= \begin{bmatrix}
        1 & & & & & & \\
        & \cos\theta_1&-\sin\theta_1 & & &\\
        &\sin\theta_1 &\cos\theta_1 & & & \\
         & & & \cos\theta_2 &-\sin\theta_2 & \\
         & & & \sin\theta_2&\cos\theta_2 & &\\
         & & &  & &\cos\theta_3&-\sin\theta_3\\
         & & &  & &\sin\theta_3&\cos\theta_3\\
    \end{bmatrix}\begin{bmatrix}
        \alpha_1 \\
        \alpha_2 \\
        \alpha_3 \\\alpha_4\\\alpha_5 \\\alpha_6 \\ \alpha_7 
    \end{bmatrix}\\
    &= \begin{bmatrix}
       \alpha_1 \\
        \alpha_2 \cos\theta_1 -\alpha_3\sin\theta_1 \\
        \alpha_2\sin\theta_1 +\alpha_3\cos\theta_1 \\\alpha_4\cos\theta_2 -\alpha_5\sin\theta_2\\ \alpha_4 \sin\theta_2 + \alpha_5 \cos\theta_2\\ \alpha_6\cos\theta_3 -\alpha_7 \sin\theta_3 \\ 
        \alpha_6 \sin\theta_3 +\alpha_7 \cos\theta_3
    \end{bmatrix} =  \begin{bmatrix}
        \alpha_1\\ e^{i\theta_1}\cdot z_1\\
        e^{i\theta_2}\cdot z_2\\
        e^{i\theta_3}\cdot z_3
    \end{bmatrix},
\end{align*} 
where we define $z_1 = \alpha_2 + i \alpha_3$, $z_2 = \alpha_4 + i \alpha_5$ and $z_3 = \alpha_6 + i\alpha_7$. From above, we can see that the orbit through a point is $2$-dimensional if and only if at most one of $z_1, z_2, z_3$ is zero.\\

Then, we define $F: (-\epsilon, \epsilon) \times  \R^2 \rightarrow \R^7$ as \begin{align*}
     F(t, \theta_1, \theta_2) = e^{X_{\theta_1, \theta_2}}\cdot \alpha(t).
\end{align*}

Substituting from above, we get
\begin{align}
\label{maxtorusmap}
    F(t, \theta_1, \theta_2) 
    &= \begin{bmatrix}
       \alpha_1 \\
        \alpha_2 \cos\theta_1 -\alpha_3\sin\theta_1 \\
        \alpha_2\sin\theta_1 +\alpha_3\cos\theta_1 \\\alpha_4\cos\theta_2 -\alpha_5\sin\theta_2\\ \alpha_4 \sin\theta_2 + \alpha_5 \cos\theta_2\\ \alpha_6\cos\theta_3 -\alpha_7 \sin\theta_3 \\ 
        \alpha_6 \sin\theta_3 +\alpha_7 \cos\theta_3
    \end{bmatrix}.
\end{align}
Hence, we have the vector fields
\begin{align*}
    V_1 := \frac{\pr F}{\pr \theta_1} &= (0, -\alpha_2\sin\theta_1-\alpha_3\cos\theta_1, \alpha_2 \cos\theta_1-\alpha_3\sin\theta_1, 0, 0, \alpha_6\sin\theta_3+\alpha_7 \cos\theta_3, \\&\quad -\alpha_6\cos\theta_3  +\alpha_7 \sin\theta_3 ),\\
    V_2 := \frac{\pr F}{\pr \theta_2} &= (0, 0, 0, -\alpha_4\sin\theta_2-\alpha_5 \cos\theta_2 , \alpha_4\cos\theta_2-\alpha_5 \sin\theta_2, \alpha_6\sin\theta_3+\alpha_7 \cos\theta_3, \\&\quad -\alpha_6\cos\theta_3  +\alpha_7 \sin\theta_3),\\
    V_3 := \frac{\pr F}{\pr t} &= (\alpha'_1,  \alpha'_2 \cos\theta_1 -\alpha'_3\sin\theta_1, \alpha'_2\sin\theta_1 +\alpha'_3\cos\theta_1, \alpha'_4\cos\theta_2 -\alpha'_5\sin\theta_2, \\&\quad  \alpha'_4 \sin\theta_2 + \alpha'_5 \cos\theta_2,  \alpha'_6\cos\theta_3 -\alpha'_7 \sin\theta_3,
        \alpha'_6 \sin\theta_3 +\alpha'_7 \cos\theta_3).
\end{align*}

From the characterization in Proposition \ref{associativeschar}, by evaluating the vector fields $V_1, V_2, V_3$ using \eqref{psi} through direct computation, we obtain the following equations $\psi_i = \psi(V_1, V_2, V_3, e_i) = 0$:
\begin{align*}
   \psi_1 &= (-\alpha_2\alpha_5\alpha_6 - \alpha_3\alpha_4\alpha_6-\alpha_2\alpha_4\alpha_7+\alpha_3\alpha_5\alpha_7)' = 0, \\
    \psi_2 &= ((-\alpha_4\alpha_6 + \alpha_5 \alpha_7)\alpha'_1 - (-\alpha_6\alpha'_6 -\alpha_7\alpha'_7+\alpha_4\alpha'_4+\alpha_5\alpha'_5)\alpha_3)\sin\theta_1 \\&\quad+ ((\alpha_4\alpha_7+\alpha_5\alpha_6)\alpha'_1 +(-\alpha_6\alpha'_6 -\alpha_7\alpha'_7+\alpha_4\alpha'_4+\alpha_5\alpha'_5)\alpha_2)\cos\theta_1 = 0,
\end{align*}
\begin{align*}
     \psi_3 &= ((\alpha_4\alpha_7 + \alpha_5 \alpha_6)\alpha'_1 + (-\alpha_6\alpha'_6 -\alpha_7\alpha'_7+\alpha_4\alpha'_4+\alpha_5\alpha'_5)\alpha_2)\sin\theta_1 \\&\quad+ ((\alpha_4\alpha_6-\alpha_5\alpha_7)\alpha'_1 +(-\alpha_6\alpha'_6 -\alpha_7\alpha'_7+\alpha_4\alpha'_4+\alpha_5\alpha'_5)\alpha_3)\cos\theta_1 = 0,
\end{align*}
\begin{align*}
    \psi_4 &= ((-\alpha_2\alpha_6 +\alpha_3 \alpha_7)\alpha'_1 - (\alpha_6\alpha'_6 +\alpha_7\alpha'_7-\alpha_2\alpha'_2-\alpha_3\alpha'_3)\alpha_5)\sin\theta_2 \\&\quad+ ((\alpha_2\alpha_7+\alpha_3\alpha_6)\alpha'_1 +(\alpha_6\alpha'_6 +\alpha_7\alpha'_7-\alpha_2\alpha'_2-\alpha_3\alpha'_3)\alpha_4)\cos\theta_2 = 0,
\end{align*}
\begin{align*}
    \psi_5 &= ((\alpha_2\alpha_7+\alpha_3\alpha_6)\alpha'_1 + (\alpha_6\alpha'_6 +\alpha_7\alpha'_7-\alpha_2\alpha'_2-\alpha_3\alpha'_3)\alpha_4)\sin\theta_2 \\&\quad+ ((\alpha_2\alpha_6 -\alpha_3 \alpha_7)\alpha'_1 +(\alpha_6\alpha'_6 +\alpha_7\alpha'_7-\alpha_2\alpha'_2-\alpha_3\alpha'_3)\alpha_5)\cos\theta_2 = 0,
\end{align*}
\begin{align*}
    \psi_6 &= ((\alpha_3\alpha_5 -\alpha_2 \alpha_4)\alpha'_1 + (\alpha_4\alpha'_4 +\alpha_5\alpha'_5-\alpha_2\alpha'_2-\alpha_3\alpha'_3)\alpha_7)\sin\theta_3 \\&\quad+ ((\alpha_2\alpha_5+\alpha_3\alpha_4)\alpha'_1 -(\alpha_4\alpha'_4+\alpha_5\alpha'_5-\alpha_2\alpha'_2-\alpha_3\alpha'_3)\alpha_6)\cos\theta_3 = 0,
\end{align*}
\begin{align*}
    \psi_7 &= ((\alpha_2\alpha_5 +\alpha_3 \alpha_4)\alpha'_1 - (\alpha_4\alpha'_4 +\alpha_5\alpha'_5-\alpha_2\alpha'_2-\alpha_3\alpha'_3)\alpha_6)\sin\theta_3 \\&\quad+ ((\alpha_2\alpha_4-\alpha_3\alpha_5)\alpha'_1 -(\alpha_4\alpha'_4+\alpha_5\alpha'_5-\alpha_2\alpha'_2-\alpha_3\alpha'_3)\alpha_7)\cos\theta_3 = 0.
\end{align*}
Rewriting the above equations in terms of complex coordinates $z_1, z_2, z_3,$ as defined above, we have the following equivalent system of equations:

\begin{align}
     \im(z_1z_2z_3)' &= 0, \label{eqn1}\\
     \im(z_2z_3) \alpha'_1 + \frac{1}{2}(|z_2|^2-|z_3|^2)'\re(z_1) &= 0, \label{eqn2}\\
      \re(z_2z_3)\alpha'_1 +\frac{1}{2}(|z_2|^2-|z_3|^2)'\im(z_1) &= 0, \label{eqn3}\\
     \im(z_1z_3) \alpha'_1 + \frac{1}{2}(|z_3|^2-|z_1|^2)'\re(z_2) &= 0, \label{eqn4}\\
    \re(z_1z_3) \alpha'_1 + \frac{1}{2}(|z_3|^2-|z_1|^2)'\im(z_2) &= 0, \label{eqn5}\\
    \im(z_1z_2) \alpha'_1 + \frac{1}{2}(|z_1|^2-|z_2|^2)'\re(z_3) &= 0, \label{eqn6}\\
    \re(z_1z_2) \alpha'_1 + \frac{1}{2}(|z_1|^2-|z_2|^2)'\im(z_3) &= 0. \label{eqn7}
\end{align}
Now, we show that every such associative submanifold has to be a special Lagrangian in $\C^3$.
\begin{theorem}
\label{assocslthm}
      Let $N \subseteq \R^7$ be invariant under the action of a maximal torus $\mathbb{T}^2 \subset \Gt$. Then \begin{align*}
          \text{N is an associative in $\R^7$ $\iff$ $N$ is a special Lagrangian in $\C^3$.}
      \end{align*}
\end{theorem}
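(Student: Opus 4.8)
The plan is to reduce everything, via Proposition \ref{assocprop}(a), to the single geometric assertion that a $\mathbb{T}^2$-invariant associative $3$-fold in $\R^7=\R\oplus\C^3$ must lie inside a slice $\{\alpha_1=c_0\}\cong\C^3$, where $\alpha_1$ denotes the $\R$-coordinate (on which $\mathbb{T}^2$ acts trivially). Identifying the relevant $\C^3$ with $\{0\}\times\C^3$, the implication ``$\Leftarrow$'' is immediate from Proposition \ref{assocprop}(a), and the implication ``$\Rightarrow$'' follows as soon as $N$ is known to lie in such a slice (translate in the $\R$-direction to arrange $c_0=0$ and apply Proposition \ref{assocprop}(a) again). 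So the theorem comes down to proving $\alpha_1'\equiv0$.

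To that end, work on the open dense subset of $N$ where the $\mathbb{T}^2$-orbits are two-dimensional, equivalently where $z_1z_2z_3\neq0$; this is the cohomogeneity-one regime in which the parametrization $F$ of \eqref{maxtorusmap} and the system \eqref{eqn1}--\eqref{eqn7} (obtained from Proposition \ref{associativeschar}) are valid. Equation \eqref{eqn1} says that $c:=\im(z_1z_2z_3)$ is constant. For the pair \eqref{eqn2}, \eqref{eqn3} I would take the $\re(z_1)$- and $\im(z_1)$-weighted linear combinations, and likewise for \eqref{eqn4}, \eqref{eqn5} using $z_2$ and for \eqref{eqn6}, \eqref{eqn7} using $z_3$. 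The first pair then yields $\re(z_1z_2z_3)\,\alpha_1'=0$ together with $c\,\alpha_1'+\tfrac12|z_1|^2(|z_2|^2-|z_3|^2)'=0$, while the other two yield the cyclic relations $c\,\alpha_1'+\tfrac12|z_2|^2(|z_3|^2-|z_1|^2)'=0$ and $c\,\alpha_1'+\tfrac12|z_3|^2(|z_1|^2-|z_2|^2)'=0$.

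Now argue by contradiction. On any open set where $\alpha_1'\neq0$ we must have $\re(z_1z_2z_3)=0$, hence $z_1z_2z_3=ic$ and so $|z_1|^2|z_2|^2|z_3|^2=c^2$ is constant there. Subtracting the three cyclic relations pairwise gives $(|z_1|^2|z_2|^2)'=(|z_1|^2+|z_2|^2)(|z_3|^2)'$ and its two cyclic analogues; combining these with the logarithmic derivative of $|z_1|^2|z_2|^2|z_3|^2=c^2$ forces $(|z_i|^2)'=0$ for every $i$, whereupon the first cyclic relation reduces to $c\,\alpha_1'=0$ with $c^2=|z_1|^2|z_2|^2|z_3|^2>0$ — a contradiction. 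Hence $\alpha_1'\equiv0$ on the dense set, and by continuity on all of $N$. As a byproduct, once $\alpha_1'=0$ the equations \eqref{eqn2}--\eqref{eqn7} collapse to $(|z_i|^2-|z_j|^2)'z_k=0$, so each $|z_i|^2-|z_j|^2$ is constant; together with \eqref{eqn1} these are precisely the defining equations of the $\mathbb{T}^2\subset\SU(3)$-invariant special Lagrangians from Proposition \ref{tn-1sl} with $n=3$, so the two families coincide.

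I expect the main obstacle to be the algebraic extraction of $\alpha_1'\equiv0$ — choosing the right $\re(z_j)$-, $\im(z_j)$-weighted combinations of the three pairs and driving the dichotomy ``$\alpha_1'=0$ or $\re(z_1z_2z_3)=0$'' cleanly to a contradiction — whereas the reductions to Propositions \ref{assocprop} and \ref{tn-1sl} are routine. One minor loose end is that the strata of $N$ on which some $z_i$ vanishes identically fall outside the cohomogeneity-one picture and should be handled separately, either by continuity from the dense locus or directly as degenerate cases.
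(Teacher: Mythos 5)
Your proposal is correct and follows the same overall strategy as the paper: both reduce to the ODE system \eqref{eqn1}--\eqref{eqn7}, both show that wherever $\alpha_1'\neq 0$ one is forced to have $\re(z_1z_2z_3)=0$, and both then drive this to a contradiction, concluding $\alpha_1'\equiv 0$ and identifying the resulting submanifolds with the $\bT^2$-invariant special Lagrangians of Proposition \ref{tn-1sl} (with the converse direction supplied by Proposition \ref{assocprop}). The difference is in the endgame. The paper substitutes $z_3=ik/(z_1z_2)$ into \eqref{eqn2}--\eqref{eqn7}, reduces to three equations in $x=|z_1|^2$, $y=|z_2|^2$, $w=\alpha_1'$, deduces $(xy)'=0$, and eliminates to force $w=0$. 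You instead take the $\re(z_j)$-, $\im(z_j)$-weighted combinations of each pair directly, obtaining $\re(z_1z_2z_3)\,\alpha_1'=0$ together with the three symmetric cyclic relations $c\,\alpha_1'+\tfrac12|z_i|^2\bigl(|z_j|^2-|z_k|^2\bigr)'=0$; pairwise differences give $(|z_i|^2|z_j|^2)'=(|z_i|^2+|z_j|^2)(|z_k|^2)'$, and combining with $(|z_1|^2|z_2|^2|z_3|^2)'=0$ yields $(|z_k|^2)'\bigl(|z_1|^2|z_2|^2+|z_2|^2|z_3|^2+|z_3|^2|z_1|^2\bigr)=0$, hence $(|z_k|^2)'=0$ and then $c\,\alpha_1'=0$ with $c\neq 0$. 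I have checked this algebra and it is sound. Your route is more symmetric, never needs the explicit substitution $z_3=ik/(z_1z_2)$, and avoids the paper's auxiliary step of first verifying $(|z_2|^2-|z_3|^2)'\neq 0$ before eliminating $\alpha_1'$. Your closing caveat about the strata where some $z_i$ vanishes is fair; the paper sidesteps this by assuming $z_k\neq 0$ for all $k$ from the outset, which is the implicit cohomogeneity-one hypothesis of the construction.
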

\begin{proof}
    Let us take $z_1(t),z_2(t) \neq 0$ for all $t$. First, suppose  $\alpha_1' = 0$ in the equations \eqref{eqn1}-\eqref{eqn5} above. Note that this forces $z_3(t) \neq 0$ for all $t$, because if $z_3(t) = 0$, then we have 
    \begin{align*}
        V_1 = \begin{bmatrix}
            0\\ie^{i\theta_1}z_1\\0\\0
        \end{bmatrix},V_2 = \begin{bmatrix}
            0\\0\\ie^{i\theta_2}z_2\\0
        \end{bmatrix}, V_3 = \begin{bmatrix}
            0\\e^{i\theta_1}z'_1\\e^{i\theta_2}z'_2\\0
        \end{bmatrix},
    \end{align*} 
    which shows that $\rank(dF) < 3$ and hence $N$ cannot be a $3$-fold. Thus, we have the equations 
    \begin{align*}
        \im(z_1z_2z_3)' &= 0, \\
        (|z_3|^2-|z_1|^2)' &= 0,\\
        (|z_2|^2-|z_3|^2)' &= 0,
    \end{align*}
    which we know to be a $\mathbb{T}^2$-invariant special Lagrangian $3$-fold from Section \ref{maxtorusSL}.  
    Next, let us assume that $\alpha'_1 \neq 0$. Then, we can eliminate $\alpha'_1$ from the equations. Let us consider the pair of equations \eqref{eqn6} and \eqref{eqn7}. Note that we have \begin{align*}
        (|z_1|^2-|z_2|^2)' \neq 0
    \end{align*}
    because if $(|z_1|^2-|z_2|^2)' = 0$, then as $\alpha'_1\neq 0$, we have $\im(z_1z_2) = \re(z_1 z_2) = 0$ which gives us $z_1 z_2 = 0$. This is a contradiction as $z_1,z_2 \neq 0$ from our assumption above. Hence, eliminating $\alpha'_1$ from \eqref{eqn6} and \eqref{eqn7},
    we have \begin{align*}
    \frac{1}{2}(|z_1|^2-|z_2|^2)' (\re(z_1z_2)\re(z_3)-\im(z_1z_2)\im(z_3)) &= 0\\
    \implies \re(z_1z_2z_3) &= 0.
\end{align*}
Combining this with $\im(z_1z_2z_3)' = 0$, we have that \begin{align*}
    z_1 z_2 z_3 = ki,
\end{align*}
for $k \in \R$. Note that if $k = 0$, then since $z_1, z_2 \neq 0$, we get $z_3 = 0$. But then from \eqref{eqn6} and \eqref{eqn7}, we have $\alpha'_1 = 0$, which is a contradiction. Thus, we can assume that $k \neq 0$. Now, substituting $z_3 = ik/(z_1z_2)$ into the equations \eqref{eqn2}-\eqref{eqn7} gives us
\begin{align*}
    \im\left(\frac{ik}{z_1}\right) \alpha'_1 + \frac{1}{2}\left(|z_2|^2-\frac{k^2}{|z_1z_2|^2}\right)'\re(z_1) &= 0,\\
     \re\left(\frac{ik}{z_1}\right)\alpha'_1 +\frac{1}{2}\left(|z_2|^2-\frac{k^2}{|z_1z_2|^2}\right)'\im(z_1) &= 0,\\
    \im\left(\frac{ik}{z_2}\right) \alpha'_1 +\frac{1}{2}\left(\frac{k^2}{|z_1z_2|^2} -|z_1|^2\right)'\re(z_2) &= 0,\\
    \re\left(\frac{ik}{z_2}\right) \alpha'_1 + \frac{1}{2}\left(\frac{k^2}{|z_1z_2|^2}-|z_1|^2\right)'\im(z_2) &= 0,\\
    \im(z_1z_2) \alpha'_1 + \frac{1}{2}(|z_1|^2-|z_2|^2)'\re\left(\frac{ik}{z_1 z_2}\right) &= 0,\\
     \re(z_1z_2) \alpha'_1 + \frac{1}{2}(|z_1|^2-|z_2|^2)'\im\left(\frac{ik}{z_1 z_2}\right) &= 0.
\end{align*}
Then combining each pair of equations, we get \begin{align}
    \frac{ik}{z_1}\alpha'_1 +\frac{i}{2}\left(|z_2|^2-\frac{k^2}{|z_1z_2|^2}\right)' \overline{z}_1 &= 0 \notag\\
    \implies 2k\alpha'_1 +\left(|z_2|^2-\frac{k^2}{|z_1z_2|^2}\right)' |{z}_1|^2 &= 0, \label{cplx1} \\
    \frac{ik}{z_2}\alpha'_1 +\frac{i}{2}\left(\frac{k^2}{|z_1z_2|^2}-|z_1|^2\right)' \overline{z}_2 &= 0\notag\\
    \implies 2k\alpha'_1 +\left(\frac{k^2}{|z_1z_2|^2}-|z_1|^2\right)' |{z}_2|^2 &= 0, \label{cplx2}\\
    z_1z_2\alpha'_1 +\frac{i}{2}\left(|z_1|^2-|z_2|^2\right)' \overline{\frac{ik}{z_1z_2}} &= 0\notag\\
    \implies 2|z_1z_2|^2\alpha'_1 +k\left(|z_1|^2-|z_2|^2\right)'  &= 0. \label{cplx3}
\end{align}
We want to now show that there are no solutions to the system of equations \eqref{cplx1}-\eqref{cplx3} when $\alpha_1' \neq 0$. For simplification, let us write $x(t) = |z_1(t)|^2, y = |z_2(t)|^2$, and $w(t) = \alpha'_1(t)$. That is, we have the system of equations 
\begin{align}
    2kw+\left(y-\frac{k^2}{xy}\right)'x &= 0, \label{spl1}\\
    2kw+\left(\frac{k^2}{xy}-x\right)'y &= 0 ,\label{spl2}\\
    2xyw+k(x-y)' &= 0, \label{spl3}
\end{align} 
where $x(t), y(t) > 0$, $w(t) \neq 0$
and $k \neq 0$. From equating \eqref{spl1} and \eqref{spl2}, we get \begin{align}
\label{spl4}
    0 = \left(y-\frac{k^2}{xy}\right)'x -\left(\frac{k^2}{xy}-x\right)'y 
    &= \frac{(x^2y^2+k^2x+k^2y)(xy)'}{x^2y^2},
\end{align}
which gives us that $x(t)y(t) = c$ for some $c \in \R_{>0}$ and hence substituting $y(t) = \frac{c}{x(t)}$ in both \eqref{spl1} and \eqref{spl2}, gives us \begin{align}
\label{final1}
    2kw -c\frac{x'}{x} = 0
\end{align}
and \eqref{spl3} gives \begin{align}
\label{final2}
    2cw + k\left(x'+\frac{c}{x^2}x'\right) = 0.
\end{align} 
Substituting \eqref{final1} into \eqref{final2}, we obtain 
\begin{align*}
    \frac{2w}{xc}(c^2 x + k^2 x^2 + k^2 c) = 0 \implies w = 0.
\end{align*}
 Thus, we must have $\alpha'_1 = 0$. Note that we could have taken either $z_1(t),z_3(t) \neq 0$ or $z_2(t),z_3(t) \neq 0$ for all $t$ above instead, which will yield a similar proof.  Therefore, $N$ is always a special Lagrangian in $\C^3$.
\end{proof}

\begin{rmk}
\label{assocremark}
    Note that as we have
    \begin{align*}
        V_1 &= \frac{\pr F}{\pr \theta_1} = ie^{i\theta_1}z_1 \frac{\pr}{\pr z_1}- ie^{-i\theta_1}\overline{z}_1 \frac{\pr}{\pr \overline{z}_1}- ie^{i\theta_3}{z}_3 \frac{\pr}{\pr {z}_3}+ ie^{-i\theta_3}\overline{z}_3 \frac{\pr}{\pr \overline{z}_3},\\
        V_2 &= \frac{\pr F}{\pr \theta_2} = ie^{i\theta_2}z_2 \frac{\pr}{\pr z_2}- ie^{-i\theta_2}\overline{z}_2 \frac{\pr}{\pr \overline{z}_2}- ie^{i\theta_3}{z}_3 \frac{\pr}{\pr {z}_3}+ ie^{-i\theta_3}\overline{z}_3 \frac{\pr}{\pr \overline{z}_3},
    \end{align*}
    we obtain $$\omega(V_1, V_2) = 0.$$ Hence, from \cite[Theorem III.5.5]{harveylawson}, we know that there is a unique special Lagrangian $3$-plane $M$ in $\C^3$ containing the subspace spanned by $V_1$ and $V_2$. As special Lagrangian $3$-planes in $\C^3$ are associatives in $\R^7$ (see Proposition \ref{assocprop}), from uniqueness, it follows that the associative $3$-plane $N$ spanned by $V_1, V_2, V_3$ must be equal to $M$. Theorem \ref{assocslthm} is an explicit demonstration of this fact, without using the existence of the special Lagrangian from \cite[Theorem III.5.5]{harveylawson}.
\end{rmk}

\section{$\bT^3$-invariant Cayley $4$-folds}
\label{cayley}

Similar to the last section, we now consider the action of a maximal torus in $\Spin(7)$ on $\R^8$ and show that a $\mathbb{T}^3$-invariant Cayley $4$-fold must be a special Lagrangian $4$-fold in $\C^4$. Since $\SU(4) \subset \Spin(7)$ and both have rank $3$, any maximal torus for $\SU(4)$ is a maximal torus for $\Spin(7)$. Thus, with respect to the identification $\R^8 \cong \C^4$, we know that the elements $X$ in the Lie algebra of a maximal torus $\bT^3 \subset \Spin(7)$ are of the form \begin{align*}
    X_{\theta_1, \theta_2, \theta_3} &= 
    \begin{bmatrix}
        &-\theta_1\\
        \theta_1\\
        &&&-\theta_2&&\\
        && \theta_2  &&&\\
        &&  &&& -\theta_3\\
        &&&&\theta_3\\
        &&&&&&& -\theta_4\\
        &&&&&&\theta_4 \\
    \end{bmatrix}
\end{align*}
where $\theta_1 + \theta_2 + \theta_3 + \theta_4 = 0$ for this identification. Let us take the complex coordinates
\begin{align*}
    w_1 &= \alpha_0 + i\alpha_1,\\
    w_2 &= \alpha_2 + i\alpha_3, \\
    w_3 &= \alpha_4 + i\alpha_5,\\
    w_4 &= \alpha_6 + i\alpha_7.
\end{align*}
Then, we define the map $F: (-\epsilon, \epsilon) \times  \R^3 \rightarrow \R^8$ as
\begin{align*}
 F(t, \theta_1, \theta_2, \theta_3) = e^{X_{\theta_1, \theta_2, \theta_3}}\cdot \alpha(t)= \begin{bmatrix}
     \alpha_0 \cos\theta_1 -\alpha_1\sin\theta_1\\
     \alpha_0 \sin\theta_1 +\alpha_1\cos\theta_1\\
     \alpha_2 \cos\theta_2 -\alpha_3\sin\theta_2\\
     \alpha_2 \sin\theta_2 +\alpha_3\cos\theta_2\\
     \alpha_4 \cos\theta_3 -\alpha_5\sin\theta_3\\
     \alpha_4 \sin\theta_3 +\alpha_5\cos\theta_3\\
     \alpha_6 \cos\theta_4 -\alpha_7\sin\theta_4\\
     \alpha_6 \sin\theta_4 +\alpha_7\cos\theta_4\\
 \end{bmatrix}
 = \begin{bmatrix}
     e^{i\theta_1}w_1\\
     e^{i\theta_2}w_2\\
     e^{i\theta_3}w_3\\
     e^{i\theta_4}w_4\\
 \end{bmatrix}.
\end{align*}
Thus, the vector fields are given as 
\begin{align*}
    V_1 := \frac{\pr F}{\pr \theta_1} &= i e^{i\theta_1} w_1\frac{\pr}{\pr z_1}- i e^{-i\theta_1} \overline{w_1}\frac{\pr}{\pr \overline{z}_1}- i e^{i\theta_4} w_4\frac{\pr}{\pr z_4}+ i e^{-i\theta_4} \overline{w_4}\frac{\pr}{\pr \overline{z}_4}\\
    V_2 :=  \frac{\pr F}{\pr \theta_2} &= i e^{i\theta_2} w_2\frac{\pr}{\pr z_2}- i e^{-i\theta_2} \overline{w_2}\frac{\pr}{\pr \overline{z}_2}- i e^{i\theta_4} w_4\frac{\pr}{\pr z_4}+i e^{-i\theta_4} \overline{w_4}\frac{\pr}{\pr \overline{z}_4}\\
    V_3 :=    \frac{\pr F}{\pr \theta_3} &= i e^{i\theta_3} w_3\frac{\pr}{\pr z_3}- i e^{-i\theta_3} \overline{w_3}\frac{\pr}{\pr \overline{z}_3}- i e^{i\theta_4} w_4\frac{\pr}{\pr z_4}+ i e^{-i\theta_4} \overline{w_4}\frac{\pr}{\pr \overline{z}_4}\\
    V_4 := \frac{\pr F}{\pr t} &= e^{i\theta_1}\frac{\pr w_1}{\pr t}\frac{\pr}{\pr z_1} + e^{-i\theta_1}\frac{\pr \overline{w_1}}{\pr t}\frac{\pr}{\pr \overline{z}_1}+ e^{i\theta_2}\frac{\pr w_2}{\pr t}\frac{\pr}{\pr z_2} + e^{-i\theta_2}\frac{\pr \overline{w_2}}{\pr t}\frac{\pr}{\pr \overline{z}_2}\\
    &\quad+e^{i\theta_3}\frac{\pr w_3}{\pr t}\frac{\pr}{\pr z_3} + e^{-i\theta_3}\frac{\pr \overline{w_3}}{\pr t}\frac{\pr}{\pr \overline{z}_3}+ e^{i\theta_4}\frac{\pr w_4}{\pr t}\frac{\pr}{\pr z_4} + e^{-i\theta_4}\frac{\pr \overline{w_4}}{\pr t}\frac{\pr}{\pr \overline{z}_4},
\end{align*}
where we used the fact that for functions $f,g$,
\begin{align}
\begin{split}
\label{functions}    
    f'\frac{\pr}{\pr x} + g'\frac{\pr}{\pr y} &= f'\left(\frac{\pr}{\pr z} + \frac{\pr}{\pr \overline{z}}\right) + g'i\left(\frac{\pr}{\pr z} - \frac{\pr}{\pr \overline{z}}\right)\\
    &= (f+ig)' \frac{\pr}{\pr z} + (f-ig)'\frac{\pr}{\pr \overline{z}}.
\end{split}
\end{align}

\begin{theorem}
\label{cayleysl}
      Let $N \subseteq \R^8$ be invariant under the action of the maximal torus $\mathbb{T}^3 \subset \Spin(7)$. Then \begin{align*}
          \text{N is a Cayley $4$-fold in $\R^8$ $\iff$ $N$ is a special Lagrangian in $\C^4$.}
      \end{align*}
\end{theorem}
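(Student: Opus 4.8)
The plan is to prove both directions straight from the identity $\Phi = \re\Omega + \tfrac12\omega^2$ recorded above, with no differential equations at all. One direction is free: a special Lagrangian $4$-fold in $\C^4 = \R^8$ is Cayley by Proposition \ref{cayleyprop}. For the other direction, I would suppose $N$ is Cayley and $\mathbb{T}^3$-invariant with principal ($3$-dimensional) orbits, so that $w_1,w_2,w_3,w_4$ are nowhere vanishing along $\alpha$ and, wherever $F$ is an immersion, $T_pN = \operatorname{span}\{V_1,V_2,V_3,V_4\}$ with the $V_i$ as above.

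The only real computation is to check that the orbit directions $V_1,V_2,V_3$ span an $\omega$-isotropic subspace. From the complex-coordinate formulas for the $V_i$, each $V_i$ with $i\in\{1,2,3\}$ has nonzero components only along $\pr/\pr z_i,\pr/\pr\overline z_i,\pr/\pr z_4,\pr/\pr\overline z_4$; hence for $i\neq j$ in $\{1,2,3\}$ the only term of $\omega=\tfrac i2\sum_k dz_k\wedge d\overline z_k$ that can contribute to $\omega(V_i,V_j)$ is the $dz_4\wedge d\overline z_4$ term, and a one-line check shows its two contributions are equal and cancel. So $\omega(V_a,V_b)=0$ for all $a,b\in\{1,2,3\}$; equivalently, every $\mathbb{T}^3$-orbit on $N$ is an isotropic $3$-torus in $\C^4$.

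Then I would finish as follows. Since $\omega|_{T_pN}$ vanishes on the hyperplane $\operatorname{span}\{V_1,V_2,V_3\}$ of $T_pN$, it is decomposable, so $(\omega|_{T_pN})^2=0$ --- and this conclusion needs no information about the mixed entries $\omega(V_i,V_4)$, which therefore never have to be computed. Writing the Cayley calibration condition with the orientation that makes $N$ Cayley,
\[
 \vol_{T_pN}=\Phi|_{T_pN}=\re\Omega|_{T_pN}+\tfrac12\big(\omega|_{T_pN}\big)^2=\re\Omega|_{T_pN},
\]
so $T_pN$ is calibrated by $\re\Omega$, i.e. is a special Lagrangian $4$-plane (in particular $\omega|_{T_pN}=0$ and $\im\Omega|_{T_pN}=0$). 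As $p\in N$ was arbitrary, $N$ is calibrated by $\re\Omega$, hence is special Lagrangian in $\C^4$; being $\mathbb{T}^3$-invariant, it is one of the examples classified in Section \ref{maxtorusSL} with $n=4$.

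I do not expect a serious obstacle here: in contrast to Theorem \ref{assocslthm}, whose proof hinges on analyzing an over-determined ODE system, everything becomes algebraic once the orbit isotropy is established, and the small insight is that $(\omega|_{T_pN})^2$ vanishes automatically so the Cayley identity degenerates onto the special Lagrangian condition. The only thing to get right is the bookkeeping of the $V_i$ in complex coordinates in the isotropy step. As an alternative (parallel to Remark \ref{assocremark}) one could, after the isotropy step, invoke Harvey--Lawson's uniqueness of the special Lagrangian $4$-fold containing a given real-analytic isotropic $3$-fold; but that route still needs $N$ to be Lagrangian, which is precisely what the displayed computation supplies.
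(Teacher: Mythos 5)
Your proposal is correct and follows essentially the same route as the paper: both reduce to showing that $\tfrac12\,\omega\wedge\omega$ vanishes on $T_pN=\operatorname{span}\{V_1,V_2,V_3,V_4\}$, so that $\Phi$ restricts there to $\re\Omega$ and the Cayley condition degenerates to the special Lagrangian one. The only difference is cosmetic: the paper expands $\omega\wedge\omega$ into the six terms $dz_i\wedge dz_j\wedge d\overline{z}_i\wedge d\overline{z}_j$ and evaluates each on the $V_i$ directly, whereas you obtain the same vanishing from the isotropy of the orbit directions $V_1,V_2,V_3$ (which the paper also records) together with the observation that a $2$-form on a $4$-dimensional space annihilating a hyperplane is decomposable.
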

\begin{proof}
    The Cayley form $\Phi$ can be written as 
    \begin{align*}
    \Phi = \frac{1}{2}\omega \wedge \omega + \re (\Omega), 
    \end{align*}
where 
\begin{align*}
    \omega = \frac{i}{2}\left(dz_1 \wedge d\overline{z}_1+ dz_2 \wedge d\overline{z}_2 + dz_3 \wedge d\overline{z}_3 + dz_4 \wedge d\overline{z}_4\right),
\end{align*}
and 
\begin{align*}
   \Omega =  dz_1 \wedge dz_2 \wedge dz_3 \wedge dz_4.
\end{align*}
Then, 
\begin{align*}
    \omega \wedge \omega &= \frac{-1}{4} (\sum_i dz_i \wedge d\overline{z}_i)\wedge (\sum_j dz_j \wedge d\overline{z}_j)\\
    &= \frac{-1}{4} (\sum_{i \neq j} dz_i \wedge d\overline{z}_i \wedge dz_j \wedge d\overline{z}_j)\\
    &= \frac{1}{2} \sum_{i, j = 1, i < j}^{4}dz_i \wedge dz_j \wedge d\overline{z}_i  \wedge d\overline{z}_j\\
    &= \frac{1}{2}(dz_1 \wedge dz_2 \wedge d\overline{z}_1 \wedge d\overline{z}_2 + dz_1 \wedge dz_3 \wedge d\overline{z}_1 \wedge d\overline{z}_3 + dz_1 \wedge dz_4 \wedge d\overline{z}_1 \wedge d\overline{z}_4 \\
    &\quad+ dz_2 \wedge dz_3 \wedge d\overline{z}_2 \wedge d\overline{z}_3 + dz_2 \wedge dz_4 \wedge d\overline{z}_2 \wedge d\overline{z}_4 + dz_3 \wedge dz_4 \wedge d\overline{z}_3 \wedge d\overline{z}_4).
\end{align*}
A computation gives
\begin{align*}
    (dz_1 \wedge dz_4 \wedge d\overline{z}_1 \wedge d\overline{z}_4)(V_1, V_2, V_3, V_4) &= -dz_1(V_1)dz_4(V_2)d\overline{z}_4(V_3)d\overline{z}_1(V_4)\\
    &\quad+dz_1(V_1)dz_4(V_3)d\overline{z}_4(V_2)d\overline{z}_1(V_4)\\
    &\quad-dz_1(V_4)dz_4(V_3)d\overline{z}_4(V_2)d\overline{z}_1(V_1)\\
    &\quad+dz_1(V_4)dz_4(V_2)d\overline{z}_4(V_3)d\overline{z}_1(V_1)\\
    &= i|w_4|^2\frac{\pr |w_1|^2}{\pr t}-i|w_4|^2\frac{\pr |w_1|^2}{\pr t} = 0
\end{align*}
and similarly, 
\begin{align*}
   (dz_2 \wedge dz_4 \wedge d\overline{z}_2 \wedge d\overline{z}_4)(V_1, V_2, V_3, V_4) = (dz_3 \wedge dz_4 \wedge d\overline{z}_3 \wedge d\overline{z}_4)(V_1, V_2, V_3, V_4) = 0.
\end{align*}
The other three terms in $\omega \wedge \omega$ vanish trivially when evaluated on $V_1, V_2, V_3, V_4$.
Therefore, 
\begin{align*}
    (\omega \wedge \omega)(V_1, V_2, V_3, V_4) = 0.
\end{align*}
We have that $V_1, V_2, V_3, V_4$ span a Cayley subspace iff the subspace spanned by $V_1, V_2, V_3, V_4$ is calibrated by $\Phi$. But as $$\Phi(V_1, V_2, V_3, V_4) = (\re(\Omega))(V_1, V_2, V_3, V_4),$$
$V_1, V_2, V_3, V_4$ span a special Lagrangian $4$-plane in $\C^4$.
\end{proof}

Finally, we show explicitly that the special Lagrangians we obtain in Theorem \ref{cayleysl} are exactly the $\bT^3$-invariant SL $4$-folds we saw in Proposition \ref{tn-1sl}. We have that if $i, j \neq 4$, \begin{align}
\label{LCayley}
    \omega(V_i, V_j) = 0,
\end{align}
and 
\begin{align*}
    \omega(V_1, V_4) &= \frac{-1}{2}(\omega_1\frac{\overline{\omega_1}}{\pr t} + \overline{\omega_1}\frac{{\omega_1}}{\pr t} - (\omega_4\frac{\overline{\omega_4}}{\pr t} + \overline{\omega_4}\frac{{\omega_4}}{\pr t}))\\
    &= \frac{-1}{2}\frac{\pr}{\pr t}(|\omega_1|^2-|\omega_4|^2),\\
    \omega(V_2, V_4) &= \frac{-1}{2}\frac{\pr}{\pr t}(|\omega_2|^2-|\omega_4|^2),\\
    \omega(V_3, V_4) &= \frac{-1}{2 }\frac{\pr}{\pr t}(|\omega_3|^2-|\omega_4|^2).
\end{align*}
So $V_1, V_2, V_3, V_4$ form a Lagrangian submanifold iff \begin{align*}
    |\omega_4|^2-|\omega_k|^2 = c_k,
\end{align*}
for some $c_k \in \R$ and $k = 1, 2, 3$. Furthermore, we have

\begin{align*}
    (dz_1 \wedge dz_2 \wedge dz_3 \wedge dz_4)(V_1, V_2, V_3, V_4) &= dz_1(V_1)dz_2(V_2)dz_3(V_3)dz_4(V_4)\\&\quad-dz_1(V_1)dz_2(V_2)dz_3(V_4)dz_4(V_3)
    \\
    &\quad-dz_1(V_1)dz_2(V_4)dz_3(V_3)dz_4(V_2)\\&\quad-dz_1(V_4)dz_2(V_2)dz_3(V_3)dz_4(V_1)\\
    &= -i\bigg(w_1w_2w_3\frac{\pr w_4}{\pr t}+ w_1w_2\frac{\pr w_3}{\pr t}w_4\\&\quad+ w_1\frac{\pr w_2}{\pr t}w_3w_4+\frac{\pr w_1}{\pr t}w_2w_3w_4\bigg)\\
    &= -i(w_1w_2w_3w_4)'.
\end{align*}

Thus, we get that 
\begin{align*}
    \im(dz_1 \wedge dz_2 \wedge dz_3 \wedge dz_4) = 0 \iff \frac{\pr}{\pr t}\re(w_1 w_2 w_3 w_4) = 0,
\end{align*}
which give us the conditions for  the $4$-plane $N$ spanned by $V_1, V_2, V_3, V_4$ to form a special Lagrangian submanifold.
\begin{rmk}
    Similar to Remark \ref{assocremark}, we have from \eqref{LCayley} and \cite[Theorem III.5.5]{harveylawson} that there is a unique special Lagrangian $4$-plane $M$ in $\C^4$ containing the subspace spanned by $V_1$, $V_2$ and $V_3$. As special Lagrangian $4$-planes in $\C^4$ are Cayleys in $\R^8$ (see Proposition \ref{cayleyprop}), from uniqueness, it follows that the Cayley $4$-plane $N$ spanned by $V_1, V_2, V_3, V_4$ must be equal to $M$. Theorem \ref{cayleysl} is an explicit demonstration of this fact, without using the existence of the special Lagrangian from \cite[Theorem III.5.5]{harveylawson}.
\end{rmk}

\section{$\Sp(1)$-invariant coassociative $4$-folds}
\label{sp1invariant}

We now describe a particular action of the unit quaternions $\Sp(1) \subset \bH$ on $\R^7$ and construct coassociative $4$-folds in $\R^7$ which are invariant under this action. We start off with a more general ansatz that the solution curve must lie in a $4$-plane $\im \bH \oplus e\R^+  \subset \im \bO$, but we show that the curve must lie in a $2$-plane $\R \vec{\epsilon} \oplus e\R^+$ as in  \cite[Section IV.3]{harveylawson}, giving us a rigidity result.  \\

From standard Lie algebra theory, we know that 
\begin{align*}
    T_{\id} S^3 = \im \bH.
\end{align*}
Let $b_1, b_2, b_3$ be an orthonormal basis of $\im \bH$. Then, let $e_1, e_2, e_3$ be the orthonormal frame for $S^3$ given by \begin{align*}
    (e_m)_p = b_m \cdot p, 
\end{align*}
for $p \in S^3$. Note that this is a right-invariant orthonormal frame since if we take $b_m \in T_{\id} S^3 = \im \bH$ and a curve $\gamma: (-\epsilon, \epsilon) \rightarrow S^3$ with $\gamma(0) = \id$ and $\gamma'(0) = b_m$, then for $p \in S^3$,
\begin{align*}
    (R_p)_{*, \id}((e_m)_{\id})= (R_p)_{*, \id}(b_m) &= \frac{d}{dt}\bigg|_{t = 0} (R_p \circ \gamma(t))\\ &= \frac{d}{dt}\bigg|_{t = 0}  \gamma(t) \cdot p= b_m \cdot p = (e_m)_p.
\end{align*}
Let us take the following action of $\Sp(1)$ on $\R^7 \cong \R^3 \oplus \R^4 \cong \im \bH \oplus e\bH$ given as 
\begin{align}
\label{action}
    p \cdot (x, y) = (px\overline{p}, py),
\end{align}
with coordinates $(x_2, x_4, x_6)$ on $\im \bH$ and  $e\bH $ with coordinates $(x_1, x_3, x_5, x_7)$ where $x_i$ are the standard coordinates on $\R^7$. This action embeds $\Sp(1)$ as a subgroup of $\Gt$ (see \cite[Theorem IV.1.8]{harveylawson} for details). Note that the action \eqref{action} differs from the one in \cite{harveylawson} because of our conventions for $\varphi$ and $\psi$ but the actions are equivariantly isomorphic.\\

Now, let us fix a curve  $\alpha: (-\epsilon, \epsilon) \rightarrow  \R^7$  given as \begin{align*}
    \alpha(s) = (\alpha_1(s), \alpha_2(s)).
\end{align*}
Let $F: (-\epsilon, \epsilon) \times S^3 \rightarrow \R^7$ be given as 
\begin{align*}
    F(s, p) = p \cdot \alpha(s) = (p\alpha_1(s)\overline{p}, p\alpha_2(s)). 
\end{align*}

Then, for $m = 1, 2, 3$, we define 
\begin{align*}
    V_m := F_*(e_m) \bigg|_{F(s, p)}&= \frac{d}{dt}\bigg|_{t = 0} F(s, e^{tb_m} \cdot p ) \\
    &= \frac{d}{dt}\bigg|_{t = 0} (e^{tb_m}p\alpha_1(s)\overline{p}e^{-tb_m}, e^{tb_m}p \alpha_2(s) )\\
    &= ([b_m, p\alpha_1(s)\overline{p}], b_m p \alpha_2(s)).
\end{align*}
Let us now choose the basis of $\im \bH$ to be $b_1 = i$, $b_2 = j$, and $b_3 = k$. Then, we get 
\begin{align*}
    V_1 &= ([i, p\alpha_1(s)\overline{p}], i p \alpha_2(s)),\\
    V_2 &= ([j, p\alpha_1(s)\overline{p}], jp \alpha_2(s)),\\
    V_3 &= ([k, p\alpha_1(s)\overline{p}], k p \alpha_2(s)),
\end{align*}
and we define 
\begin{align*}
    V_4 = F_*\left(\frac{\pr}{\pr s}\right) \bigg|_{F(s, p)} &= (p\alpha'_1(s)\overline{p}, p\alpha'_2(s)),
\end{align*}
To compute the $V_i$, let us take 
\begin{align*}
    p &= p_0 + p_1 i + p_2 j + p_3 k,\\
    \alpha_1(s) &= v_1(s) i + v_2(s) j + v_3(s) k ,\\
    \alpha_2(s) &= u_0(s) + u_1(s) i + u_2(s) j + u_3(s) k.
\end{align*}
Then, using quaternionic multiplication, we obtain
\begin{align*}
    V_1 &= [2v_3(-p_0^2+p_1^2+p_2^2-p_3^2)+4v_1(p_0p_2-p_1p_3)-4v_2(p_0p_1+p_3p_2)]\frac{\pr}{\pr x^4}\\&\quad + [2v_2(p_0^2-p_1^2+p_2^2-p_3^2)+4v_1(p_0p_3+p_1p_2)+4v_3(-p_0p_1+p_2 p_3)]\frac{\pr}{\pr x^6}
    \\&\quad-(u_1p_0+u_0p_1+u_3p_2-u_2p_3)\frac{\pr}{\pr x^1}+(u_0p_0-u_1p_1-u_2p_2-u_3p_3)\frac{\pr}{\pr x^3}\\
    &\quad-(u_3p_0+u_2p_1-u_1p_2+u_0p_3)\frac{\pr}{\pr x^5}+ (u_2p_0-u_3p_1+u_0p_2+u_1p_3)\frac{\pr}{\pr x^7},\\
    V_2 &= [2v_3(p_0^2-p_1^2-p_2^2+p_3^2)+4v_1(p_1p_3-p_0p_2) + 4v_2(p_0p_1 + p_2p_3)]\frac{\pr}{\pr x^2}\\
    &\quad+[2v_1(-p_0^2-p_1^2+p_2^2+p_3^2)+4v_2(p_0p_3-p_1p_2) -4v_3(p_0p_2 + p_1p_3)]\frac{\pr}{\pr x^6}\\&\quad-(u_2p_0-u_3p_1+u_0p_2+u_1p_3)\frac{\pr}{\pr x^1}+(u_3p_0+u_2p_1-u_1p_2+u_0p_3)\frac{\pr}{\pr x^3}\\
    &\quad+(u_0p_0-u_1p_1-u_2p_2-u_3p_3)\frac{\pr}{\pr x^5}- (u_1p_0+u_0p_1+u_3p_2-u_2p_3)\frac{\pr}{\pr x^7},\\
    V_3 &= [2v_2(-p_0^2+p_1^2-p_2^2+p_3^2)-4v_1(p_0p_3+p_1p_2) + 4v_3(p_0p_1 - p_2p_3)]\frac{\pr}{\pr x^2}\\
    &\quad+[2v_1(p_0^2+p_1^2-p_2^2-p_3^2)+4v_2(-p_0p_3+p_1p_2) + 4v_3(p_0p_2 + p_1p_3)]\frac{\pr}{\pr x^4}\\&\quad-(u_3p_0+u_2p_1-u_1p_2+u_0p_3)\frac{\pr}{\pr x^1}-(u_2p_0-u_3p_1+u_0p_2+u_1p_3)\frac{\pr}{\pr x^3}\\
    &\quad+(u_1p_0+u_0p_1+u_3p_2-u_2p_3)\frac{\pr}{\pr x^5}+ (u_0p_0-u_1p_1-u_2p_2-u_3p_3)\frac{\pr}{\pr x^7},\\   
    V_4 &= [(p_0^2+p_1^2-p_2^2-p_3^2)v'_1 + 2(p_1p_2-p_0p_3)v'_2 + 2(p_1p_3+p_0p_2)v'_3]\frac{\pr}{\pr x^2}\\
    &\quad+[(p_0^2-p_1^2+p_2^2-p_3^2)v'_2 + 2(p_1p_2+p_0p_3)v'_1-2(p_0p_1-p_2p_3)v'_3]\frac{\pr}{\pr x^4}\\
    &\quad+[ (p_0^2-p_1^2-p_2^2+p_3^2)v'_3 + 2(p_1p_3-p_0p_2)v'_1+2(p_0p_1+p_2p_3)v'_2]\frac{\pr}{\pr x^6}\\
    &\quad+ (u'_0p_0-u'_1p_1-u'_2p_2-u'_3p_3)\frac{\pr}{\pr x^1}+(u'_1p_0+u'_0p_1+u'_3p_2-u'_2p_3)\frac{\pr}{\pr x^3}\\
    &\quad+(u'_2p_0-u'_3p_1+u'_0p_2+u'_1p_3)\frac{\pr}{\pr x^5}+(u'_3p_0+u'_2p_1-u'_1p_2+u'_0p_3)\frac{\pr}{\pr x^7}.
\end{align*}

 We know that when $y \neq 0$ for the action in \eqref{action}, the orbit through $(x, y)$ is diffeomorphic to $S^3$ \cite{harveylawson}. Thus, for every $(x, y) \in \R^7$ with $y \neq 0$, the orbit of $(x, y)$ contains a point of the form $(\wt{x}, t\mathbb{1})$, for $\wt{x} \in \im \bH$ and $t>0$. Indeed, if we take $p = \frac{\overline{y}}{|y|} $, then \begin{align*}
    p \cdot (x, y) = \left(\frac{\overline{y}xy}{|y|^2}, |y|\mathbb{1}\right).
\end{align*}
Hence, we can assume that the curve lies in the plane 
\begin{align*}
    \im \bH \oplus e\R^+  \subset \im \bO.
\end{align*}
 Therefore, we take $u_1(s) = u_2(s) = u_3(s) = 0$. Then, we have the following vector fields: 
 \begin{align}
 \begin{split}
 \label{generalvf}
    V_1 &= [2v_3(-p_0^2+p_1^2+p_2^2-p_3^2)+4v_1(p_0p_2-p_1p_3)-4v_2(p_0p_1+p_3p_2)]\frac{\pr}{\pr x^4}\\&\quad + [2v_2(p_0^2-p_1^2+p_2^2-p_3^2)+4v_1(p_0p_3+p_1p_2)+4v_3(-p_0p_1+p_2 p_3)]\frac{\pr}{\pr x^6}
    \\&\quad+u_0\left(-p_1\frac{\pr}{\pr x^1}+p_0\frac{\pr}{\pr x^3}-p_3\frac{\pr}{\pr x^5}+p_2\frac{\pr}{\pr x^7}\right),\\
    V_2 &= [2v_3(p_0^2-p_1^2-p_2^2+p_3^2)+4v_1(p_1p_3-p_0p_2) + 4v_2(p_0p_1 + p_2p_3)]\frac{\pr}{\pr x^2}\\
    &\quad+[2v_1(-p_0^2-p_1^2+p_2^2+p_3^2)+4v_2(p_0p_3-p_1p_2) -4v_3(p_0p_2 + p_1p_3)]\frac{\pr}{\pr x^6}\\&\quad+u_0\left(-p_2\frac{\pr}{\pr x^1}+p_3\frac{\pr}{\pr x^3}+p_0\frac{\pr}{\pr x^5}- p_1\frac{\pr}{\pr x^7}\right),\\
    V_3 &= [2v_2(-p_0^2+p_1^2-p_2^2+p_3^2)-4v_1(p_0p_3+p_1p_2) + 4v_3(p_0p_1 - p_2p_3)]\frac{\pr}{\pr x^2}\\
    &\quad+[2v_1(p_0^2+p_1^2-p_2^2-p_3^2)+4v_2(-p_0p_3+p_1p_2) + 4v_3(p_0p_2 + p_1p_3)]\frac{\pr}{\pr x^4}\\&\quad+u_0\left(-p_3\frac{\pr}{\pr x^1}-p_2\frac{\pr}{\pr x^3}+p_1\frac{\pr}{\pr x^5}+p_0\frac{\pr}{\pr x^7}\right),\\   
    V_4 &= [(p_0^2+p_1^2-p_2^2-p_3^2)v'_1 + 2(p_1p_2-p_0p_3)v'_2 + 2(p_1p_3+p_0p_2)v'_3]\frac{\pr}{\pr x^2}\\
    &\quad+[(p_0^2-p_1^2+p_2^2-p_3^2)v'_2 + 2(p_1p_2+p_0p_3)v'_1-2(p_0p_1-p_2p_3)v'_3]\frac{\pr}{\pr x^4}\\
    &\quad+[ (p_0^2-p_1^2-p_2^2+p_3^2)v'_3 + 2(p_1p_3-p_0p_2)v'_1+2(p_0p_1+p_2p_3)v'_2]\frac{\pr}{\pr x^6}\\
    &\quad+ u'_0\left(p_0\frac{\pr}{\pr x^1}+p_1\frac{\pr}{\pr x^3}+p_2\frac{\pr}{\pr x^5}+p_3\frac{\pr}{\pr x^7}\right).
\end{split}
\end{align}

We want to now obtain the four coassociative submanifold equations from Proposition \ref{coasociativeprop}: 
\begin{align}
\begin{split}
\label{generaleq}
    \varphi(V_1, V_2, V_3) &= 0,\\
    \varphi(V_1, V_2, V_4) &= 0,\\
    \varphi(V_1, V_3, V_4) &= 0,\\
    \varphi(V_2, V_3, V_4) &= 0.
\end{split}
\end{align}

We will find these equations by entering them into $\varphi$ by direct computation using \eqref{phi}. First, we get
 \begin{align*}
    \varphi(V_1, V_2, V_3) = 0.
\end{align*}
For now, let us fix $p = (1, 0, 0, 0)$ to simplify the equations. Additionally, let us denote $u = u_0$. Then, one computes that

\begin{align}
\label{vec1}
    \varphi(V_2, V_3, V_4) = 0 &\iff 4v_1^2v'_1 +4v_1(-uu'+v_2v'_2+v_3v'_3)-u^2v_1' = 0,\\
\label{vec2}
    \varphi(V_1, V_3, V_4) = 0 &\iff 4v_2^2v'_2 +4v_2(-uu'+v_1v'_1+v_3v'_3)-u^2v_2' = 0,\\
\label{vec3}
    \varphi(V_1, V_2, V_4) = 0 &\iff 4v_3^2v'_3 +4v_3(-uu'+v_1v'_1+v_2v'_2)-u^2v_3' = 0.
\end{align}
We observe that the equations \eqref{vec1}-\eqref{vec3} can be written as the following single vector equation 
\begin{align}
\label{veceq}
   2 \frac{d|\vec{v}|^2}{dt}\vec{v}-2\frac{du^2}{dt}\vec{v}-u^2\frac{d\vec{v}}{dt} = 0,
\end{align}
where $\vec{v} = (v_1, v_2, v_3)$. Taking the dot product of \eqref{veceq} with $2\vec{v}$, as $2 \vec{v}\cdot\frac{d\vec{v}}{dt} = \frac{d|\vec{v}|^2}{dt}$, we obtain the following ODE for $|\vec{v}|^2$, 
\begin{align*}
    4\frac{d|\vec{v}|^2}{dt}|\vec{v}|^2-4\frac{du^2}{dt}|\vec{v}|^2-u^2 \frac{d|\vec{v}|^2}{dt} = 0.
\end{align*}

Denote $F(t) = |\vec{v}(t)|^2$ and $G(t) = u(t)$. Then the equation \begin{align}
\label{FGeq}
    4F\frac{dF}{dt}-8FG\frac{dG}{dt}-G^2\frac{dF}{dt} = 0,
\end{align}
has the solution \begin{align}
\label{FGsol}
    F(5G^2-4F)^4 = k,
\end{align}
for some constant $k \in \R$. From this, we get the following expression for $G$:
\begin{align*}
    G^2(t) = \frac{4F(t)\pm\left(\frac{k}{F(t)}\right)^{1/4}}{5}.
\end{align*}
If $k= 0$, then we have  
\begin{align}
\label{fg2}
    F(t) = \frac{5G^2(t)}{4}.
\end{align}
Substituting this in the original equation \eqref{veceq}, we have for each $v_i$, \begin{align*}
    5G\frac{dG}{dt}v_i-4G\frac{dG}{dt}v_i-G^2\frac{dv_i}{dt} = G \frac{dG}{dt}v_i-G^2\frac{dv_i}{dt} = 0.
\end{align*}
 Therefore, we have the solution
\begin{align*}
    v_i = c_i G,
\end{align*}
for constants $c_i \in \R$. Thus, when $k = 0$ our solution is of the form 
\begin{align}
\label{c1c2c3sol}
    (c_1 u(t), c_2u(t), c_3u(t), u(t), 0, 0, 0),
\end{align}
for constants $c_1, c_2, c_3 \in \R$. That is, the solutions are given as 
\begin{align}
\label{cone}
    \alpha(t) = u(t)\cdot(\vec{c}, e),
\end{align}
where $\vec{c} = (c_1, c_2, c_3) \in \R^3$. This means that the curve $\alpha$ lies in the plane \begin{align*}
    \R \vec{c} \oplus e\R^+ \subset \im \bO.
\end{align*} 
Furthermore, from \eqref{fg2}, we have that \begin{align*}
    c_1^2 + c_2^2+c_3^2 = \frac{5}{4}.
\end{align*}
Note that the solutions \eqref{cone} form the cone 
\begin{align*}
    M_0 := \left\{\lambda(\vec{c}, e): \lambda \in \R_{\geq 0},\  \vec{c} \in \R^3,\ c_1^2 + c_2^2+c_3^2 = \frac{5}{4} \right\}.
\end{align*}  

Let us now consider the case when $k \neq 0$. First, let us assume arc-length parametrization. That is, 
\begin{align}
\label{arclength}
    \left(\frac{dv_1}{dt}\right)^2 + \left(\frac{dv_2}{dt}\right)^2+ \left(\frac{dv_3}{dt}\right)^2+ \left(\frac{du}{dt}\right)^2 = \left|\frac{d\vec{v}}{dt}\right|^2 + \left|\frac{du}{dt}\right|^2 = 1.
\end{align}

Then, taking the dot product of \eqref{veceq} with $2\frac{d\vec{v}}{dt}$ and substituting \eqref{arclength} in the expression, we have 
\begin{align*}
     0 &= 2 \left(\frac{d|\vec{v}|^2}{dt}\right)^2-4u\frac{du}{dt}\frac{d|\vec{v}|^2}{dt}-2u^2\left|\frac{d\vec{v}}{dt}\right|^2\\
     &= 2\left(\frac{d|\vec{v}|^2}{dt}\right)^2-4u\frac{du}{dt}\frac{d|\vec{v}|^2}{dt}-2u^2\left(1-\left|\frac{d{u}}{dt}\right|^2\right)
\end{align*}
Taking $F = |\vec{v}|^2$ and $G = u$ again, we have 
\begin{align*}
    2\left(\frac{dF}{dt}\right)^2-4G\frac{dG}{dt}\frac{dF}{dt}-2G^2\left(1-\left(\frac{dG}{dt}\right)^2\right),
\end{align*}
from which we obtain \begin{align}
\label{dfdt1}
    \frac{dF}{dt} = G\frac{dG}{dt}\pm G.
\end{align}
Substituting the above in \eqref{FGeq} we have 
\begin{align*}
    4F\left(G\frac{dG}{dt}\pm G\right)-8FG\frac{dG}{dt}-G^2\left(G\frac{dG}{dt}\pm G\right) = 0.
\end{align*}
It follows that
\begin{align*}
    4F\left(\frac{dG}{dt}\pm 1\right)-8F\frac{dG}{dt}-G\left(G\frac{dG}{dt}\pm G\right) = (-4F-G^2)\frac{dG}{dt}\pm (4F-G^2)= 0,
\end{align*}
which gives us
\begin{align}
\label{dgdt1}
    \frac{dG}{dt} = \pm \frac{4F-G^2}{4F+G^2},
\end{align}
and 
\begin{align}
\label{dfdt2}
    \frac{dF}{dt} = \pm G  \frac{4F-G^2}{4F+G^2}\pm G = \pm \frac{8FG}{4F+G^2}.
\end{align}
We can omit the $\pm$ above since it corresponds to $t \rightarrow -t$. Substituting \eqref{dfdt2} and \eqref{dgdt1} in \eqref{veceq}, we get \begin{align}
\label{dvdt}
      \frac{16FG}{4F+G^2}\vec{v}-4G\left(\frac{4F-G^2}{4F+G^2}\right)\vec{v}-G^2\frac{d\vec{v}}{dt} = 0 \implies \frac{d\vec{v}}{dt} = \frac{4G}{4F+G^2}\vec{v},
\end{align}
as $G \neq 0$. Denoting $H(t) = \frac{4G(t)}{4F(t)+G(t)^2}$, we obtain 
\begin{align*}
     \frac{dv_i}{v_i} = H(t) dt \implies \int H(t) dt = \log v_i + b_i,
\end{align*}
for constants $b_i \in \R$, where $i = 1, 2, 3$. That is, \begin{align}
\label{vi}
    v_i = \epsilon_i e^{\int H(t) dt},
\end{align} 
for some constants $\epsilon_i \in \R$. Therefore, we again have that the curve $\alpha$ lies in the plane
\begin{align*}
    \R \vec{\epsilon} \oplus e\R^+ \subset \im \bO,
\end{align*}
where $\vec{\epsilon} = (\epsilon_1, \epsilon_2, \epsilon_3) \in \R^3$ is a constant vector.
\\

Substituting the solutions for $k = 0$ given by \eqref{c1c2c3sol} and for $k \neq 0$ given by \eqref{dgdt1}, \eqref{dfdt2}, \eqref{dvdt} in the equations obtained from \eqref{generalvf} and \eqref{generaleq}, we find that they satisfy the original equations for general points $p \in S^3$ and not just $p = (1, 0, 0, 0)$. Therefore, we have shown the following: 

\begin{theorem}
\label{sp1rigidity}
    Let  $\alpha(t) = (\vec{v}(t), u(t), 0, 0, 0)$ be the curve in $\im \bH \oplus e\R^+  \subset \im \bO$ and $V_1, V_2, V_3, V_4$ be linearly indpendent vector fields defined as above. In addition, assume that $\alpha$ is parametrized by arclength. The $V_i$ span a coassociative subspace if and only if 
    \begin{enumerate}
        \item The curve $\alpha(t)$ lies in the plane 
        \begin{align*}
        \R \vec{\epsilon} \oplus e\R^+ \subset \im \bO,
        \end{align*}
        where $\vec{\epsilon} \in \R^3$ is a constant vector, and
        \item For $F = |\vec{v}|^2$, $G = u$ and some constant $k \in \R$,, we have 
        $$F(5G^2-4F)^4 = k.$$      
    \end{enumerate}
    In particular, if $k = 0$, then we have 
    \begin{align*}
        \alpha(t) = u(t)\cdot(\vec{c}, e),
    \end{align*}
    where $\vec{c} = (c_1, c_2, c_3) \in \R^3$ is a constant vector such that 
    \begin{align*}
    c_1^2 + c_2^2+c_3^2 = \frac{5}{4}.
    \end{align*}
\end{theorem}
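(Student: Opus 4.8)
The plan is to invoke Proposition~\ref{coasociativeprop}: the orbit-swept submanifold $N=\{p\cdot\alpha(t):p\in S^3\}$ is coassociative exactly when $\varphi|_N=0$, i.e.\ $\varphi$ vanishes on every triple drawn from $\{V_1,V_2,V_3,V_4\}$, so the whole problem is to solve those four scalar equations. I would first reduce to the basepoint $p=(1,0,0,0)$. Since the action \eqref{action} is linear on $\R^7$ and realizes $\Sp(1)\subset\Gt$, the form $\varphi$ is $\Sp(1)$-invariant; moreover $V_1,V_2,V_3$ are precisely the fundamental vector fields of the action and $V_4=p\cdot\alpha'(t)$. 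Pushing forward by the linear map $x\mapsto p\cdot x$ and using invariance of $\varphi$ together with the $\mathrm{Ad}$-equivariance of fundamental vector fields, $\varphi(V_{i_1},V_{i_2},V_{i_3})$ at $p\cdot\alpha(t)$ equals the same expression at $p=(1,0,0,0)$ but with $(b_1,b_2,b_3)$ replaced by the rotated orthonormal basis $\mathrm{Ad}_{p^{-1}}(b_1,b_2,b_3)$ of $\im\bH$. Hence the four equations at $p=(1,0,0,0)$ amount to the vanishing of a fixed $2$-form and a fixed $3$-form on $\im\bH$ (the latter contracted with $\alpha'$) -- a basis-free condition -- and therefore hold automatically at every $p\in S^3$. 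This is exactly what makes the ``substitute the solution back and check for general $p$'' step at the end of the section succeed.

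Next I would do the computation at $p=(1,0,0,0)$. Expanding $\varphi$ from \eqref{phi} on the vector fields \eqref{generalvf}, one finds $\varphi(V_1,V_2,V_3)=0$ identically -- the relevant $3\times 3$ minor is the matrix of a cross product, so its determinant vanishes -- while $\varphi(V_2,V_3,V_4)=\varphi(V_1,V_3,V_4)=\varphi(V_1,V_2,V_4)=0$ are the equations \eqref{vec1}--\eqref{vec3}, which assemble into the single vector ODE \eqref{veceq}. Dotting \eqref{veceq} with $2\vec v$ gives the scalar ODE \eqref{FGeq} for $F=|\vec v|^2$ and $G=u$. I would solve it by the substitution $F=G^2\phi$, which renders it separable in $\phi=\phi(G)$; integrating by partial fractions yields $F(5G^2-4F)^4=k$, the necessary condition in the statement.

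For the structural part I would split on $k$. If $k=0$, then either $\vec v\equiv 0$ (so $\alpha$ lies in $e\R^+$), or $5G^2=4F$; substituting the latter into \eqref{veceq} collapses it to $\vec v\,'=(G'/G)\vec v$, giving $\vec v=G\,\vec c$ with $|\vec c|^2=\tfrac54$, which is the cone \eqref{cone}. If $k\neq0$, I would impose arclength parametrization \eqref{arclength}, dot \eqref{veceq} with $2\vec v\,'$ to obtain \eqref{dfdt1}, feed that into \eqref{FGeq} to solve for $G'$ and $F'$ explicitly (equations \eqref{dgdt1}--\eqref{dfdt2}, the sign absorbed by $t\mapsto -t$), and then substitute back into \eqref{veceq}, which now reduces to the linear system \eqref{dvdt}, $\vec v\,'=H(t)\vec v$ with $H=4G/(4F+G^2)$. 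As each component obeys the same scalar linear ODE, $\vec v(t)=e^{\int H\,dt}\,\vec\epsilon$ for a constant $\vec\epsilon\in\R^3$, so $\alpha$ lies in $\R\vec\epsilon\oplus e\R^+$. Conversely, the curves produced in each case visibly satisfy \eqref{veceq}, hence all four coassociative equations at $p=(1,0,0,0)$, hence -- by the invariance above -- at every $p$, which gives the ``if'' direction.

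The main obstacle is computational rather than conceptual: expanding $\varphi$ on the four vector fields of \eqref{generalvf}, whose components are long, and extracting from that the clean vector equation \eqref{veceq} together with the identity $\varphi(V_1,V_2,V_3)\equiv 0$. This is precisely where the $\Sp(1)$-equivariance pays off: it lets one carry out the algebra only at $p=(1,0,0,0)$ and conclude at all $p$ by invariance, rather than verifying the general-$p$ identities by hand. Once \eqref{veceq} is in hand, the remaining ODE analysis is routine.
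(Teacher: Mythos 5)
Your proposal follows essentially the same route as the paper: reduce to $p=(1,0,0,0)$, derive the vector ODE \eqref{veceq}, dot with $2\vec{v}$ to obtain \eqref{FGeq} and its first integral $F(5G^2-4F)^4=k$, and then split on $k$ exactly as in \eqref{cone} and \eqref{vi}. The only substantive difference is that you justify passing between the basepoint and general $p\in S^3$ via the $\Sp(1)$-invariance of $\varphi$ and the $\mathrm{Ad}$-equivariance of the fundamental vector fields (and you supply the explicit separable substitution $F=G^2\phi$ for the first integral), whereas the paper instead verifies the general-$p$ equations by substituting the solutions back in directly; both are valid, and your version is the cleaner justification of that step.
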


Note that in \cite[Section IV.3]{harveylawson} they make the assumption that the curve lies in a $2$-plane $\R \vec{\epsilon} \oplus e\R^+ \subset \im \bO$ for some constant vector $\vec{\epsilon}$. Even though we started off with the more general assumption that the curve lies in a  $4$-plane $\im \bH \oplus e\R^+  \subset \im \bO$, from \eqref{cone} and \eqref{vi} we saw that the curve must lie in a $2$-plane $\R \vec{\epsilon} \oplus e\R^+ $.

\section{$\mathbb{T}^2$-invariant coassociative $4$-folds}
\label{t2invariantcoass}
We construct examples of cohomogeneity two $\bT^2$-invariant coassociative $4$-folds in this section. Let us consider the $\mathbb{T}^2$-action from Section \ref{t2invariantassoc}. We let $\alpha: (-\epsilon_1, \epsilon_1) \times (-\epsilon_2, \epsilon_2) \rightarrow \R^7$  be  a surface in $\R^7$. Then, with respect to the complex coordinates 
\begin{align*}
    w_1(s, t) &= \alpha_2(s, t) + i \alpha_3(s, t),\\
    w_2(s, t) &= \alpha_4(s, t) + i\alpha_5(s, t), \\
    w_3(s, t) &= \alpha_6(s, t) + i \alpha_7(s, t),
\end{align*}
we have from \eqref{maxtorusmap}, that the map is given as
\begin{align*}
    F(\theta_1, \theta_2,s, t) = \begin{bmatrix}
        \alpha_1(s,t)\\ e^{i\theta_1}\cdot w_1(s,t)\\
        e^{i\theta_2}\cdot w_2(s, t)\\
        e^{i\theta_3}\cdot w_3(s,t)
    \end{bmatrix},
\end{align*}
where $\theta_1+ \theta_2 + \theta_3 = 0$. As in Section \ref{t2invariantassoc}, we have that the orbit through a point is $2$-dimensional if and only if at most one of $w_1, w_2, w_3$ is zero. Using \eqref{functions}, we get the vector fields
\begin{align*}
    V_1 := \frac{\pr F}{\pr \theta_1} &= ie^{i\theta_1}w_1 \frac{\pr}{\pr z_1}- ie^{-i\theta_1}\overline{w}_1 \frac{\pr}{\pr \overline{z}_1}- ie^{i\theta_3}{w}_3 \frac{\pr}{\pr {z}_3}+ ie^{-i\theta_3}\overline{w}_3 \frac{\pr}{\pr \overline{z}_3}\\
     V_2 := \frac{\pr F}{\pr \theta_2} &= ie^{i\theta_2}w_2 \frac{\pr}{\pr z_2}- ie^{-i\theta_2}\overline{w}_2\frac{\pr}{\pr \overline{z}_2}- ie^{i\theta_3}{w}_3 \frac{\pr}{\pr {z}_3}+ ie^{-i\theta_3}\overline{w}_3 \frac{\pr}{\pr \overline{z}_3}\\
     V_3 := \frac{\pr F}{\pr s} &= \frac{\pr \alpha_1}{\pr s}\frac{\pr}{\pr x_1} + e^{i\theta_1}\frac{\pr w_1}{\pr s} \frac{\pr}{\pr z_1}+e^{-i\theta_1}\frac{\pr \overline{w_1}}{\pr s} \frac{\pr}{\pr \overline{z}_1}+e^{i\theta_2}\frac{\pr w_2}{\pr s} \frac{\pr}{\pr z_2}+e^{-i\theta_2}\frac{\pr \overline{w_2}}{\pr {s}} \frac{\pr}{\pr \overline{z}_2}\\
     &\quad+e^{i\theta_3}\frac{\pr w_3}{\pr s} \frac{\pr}{\pr z_3}+e^{-i\theta_3}\frac{\pr \overline{w_3}}{\pr {s}} \frac{\pr}{\pr \overline{z}_3}\\
     V_4 :=  \frac{\pr F}{\pr t} &= \frac{\pr \alpha_1}{\pr t}\frac{\pr}{\pr x_1} + e^{i\theta_1}\frac{\pr w_1}{\pr t} \frac{\pr}{\pr z_1}+e^{-i\theta_1}\frac{\pr \overline{w_1}}{\pr t} \frac{\pr}{\pr \overline{z}_1}+e^{i\theta_2}\frac{\pr w_2}{\pr t} \frac{\pr}{\pr z_2}+e^{-i\theta_2}\frac{\pr \overline{w_2}}{\pr {t}} \frac{\pr}{\pr \overline{z}_2}\\
       &\quad+e^{i\theta_3}\frac{\pr w_3}{\pr t} \frac{\pr}{\pr z_3}+e^{-i\theta_3}\frac{\pr \overline{w_3}}{\pr {t}} \frac{\pr}{\pr \overline{z}_3}.
\end{align*}

We can write the $\Gt$-form $\varphi$ as 
\begin{align*}
    \varphi = dx_1 \wedge \omega + \re(dz_1 \wedge dz_2 \wedge dz_3), 
\end{align*}
where 
\begin{align*}
    \omega = \frac{i}{2}\left(dz_1 \wedge d\overline{z}_1+ dz_2 \wedge d\overline{z}_2 + dz_3 \wedge d\overline{z}_3 \right).
\end{align*}
Let us first look at the equation 
$$\varphi(V_1, V_2, V_3) = 0.$$
We have 
\begin{align*}
   ( dx_1 \wedge \omega )(V_1, V_2, V_3) &= dx_1(V_3)\omega(V_1, V_2)-dx_1(V_3)\omega(V_2, V_1)\\
   &= 2\frac{\pr \alpha_1}{\pr s}\frac{i}{2}\left(dz_3(V_1)d\overline{z}_3(V_2)-dz_3(V_2)d\overline{z}_3(V_1)\right) = 0
\end{align*}
and 
\begin{align*}
    (\re(dz_1 \wedge dz_2 \wedge dz_3))(V_1, V_2, V_3) &= \re(dz_1(V_1)dz_2(V_2)dz_3(V_3)-dz_1(V_3)dz_2(V_2)dz_3(V_1)\\&\quad-dz_1(V_1)dz_2(V_3)dz_3(V_2))\\
    &= \frac{1}{2}(dz_1(V_1)dz_2(V_2)dz_3(V_3)-dz_1(V_3)dz_2(V_2)dz_3(V_1)\\&\quad-dz_1(V_1)dz_2(V_3)dz_3(V_2)+d\overline{z}_1(V_1)d\overline{z}_2(V_2)d\overline{z}_3(V_3)\\&\quad-d\overline{z}_1(V_3)d\overline{z}_2(V_2)d\overline{z}_3(V_1)-d\overline{z}_1(V_1)d\overline{z}_2(V_3)d\overline{z}_3(V_2))\\
    &= -\frac{1}{2}\bigg(w_1 w_2 \frac{\pr w_3}{\pr s}+ \frac{\pr w_1}{\pr s}w_2 w_3 + w_1 \frac{\pr w_2}{\pr s}w_3 \\&\quad+\overline{w_1 w_2} \frac{\pr \overline{w_3}}{\pr s}+ \frac{\pr \overline{w_1}}{\pr s}\overline{w_2 w_3} + \overline{w_1} \frac{\pr \overline{w_2}}{\pr s}\overline{w_3} \bigg)\\
    &= -\frac{1}{2}\left(\frac{\pr(w_1w_2w_3)}{\pr s}+\frac{\pr(\overline{w_1w_2w_3})}{\pr s}\right)\\
    &= -\frac{\pr(\re(w_1w_2w_3))}{\pr s}.
\end{align*}
Thus, 
\begin{align*}
    \varphi(V_1, V_2, V_3) = 0 \iff \frac{\pr}{\pr s}\left(\re(w_1w_2w_3)\right)= 0.
\end{align*}
Similarly, 
\begin{align*}
    \varphi(V_1, V_2, V_4) = 0 \iff \frac{\pr}{\pr t}\left(\re(w_1w_2w_3)\right)= 0.
\end{align*}
Thus, $\re(w_1 w_2 w_3)$ must be constant. Now, consider the equation
$$\varphi(V_1, V_3, V_4) = 0.$$
We have 
\begin{align*}
    (dx_1 \wedge \omega)(V_1, V_3, V_4) &= -dx_1(V_3) \omega(V_1, V_4) +dx_1(V_3) \omega(V_4, V_1)\\
    &\quad-dx_1(V_4) \omega(V_3, V_1) +dx_1(V_4) \omega(V_1, V_3)\\
    &= -2\frac{\pr \alpha_1}{\pr s}\omega(V_1, V_4) + 2\frac{\pr \alpha_1}{\pr t}\omega(V_1, V_3) .
\end{align*}
Then, 
\begin{align*}
    \omega(V_1, V_4) &= \frac{i}{2}\left(dz_1(V_1)d\overline{z}_1(V_4)-dz_1(V_4)d\overline{z}_1(V_1)+dz_3(V_1)d\overline{z}_3(V_4)-dz_3(V_4)d\overline{z}_3(V_1)\right)\\
    &= \frac{-1}{2}\left(w_1 \frac{\pr \overline{w_1}}{\pr t}+\overline{w}_1 \frac{\pr w_1}{\pr t}-w_3 \frac{\pr \overline{w_3}}{\pr t}-\overline{w}_3 \frac{\pr w_3}{\pr t}\right),
\end{align*}
and 
\begin{align*}
    \omega(V_1, V_3) &= \frac{-1}{2}\left(w_1 \frac{\pr \overline{w_1}}{\pr s}+\overline{w}_1 \frac{\pr w_1}{\pr s}-w_3 \frac{\pr \overline{w_3}}{\pr s}-\overline{w}_3 \frac{\pr w_3}{\pr s}\right).
\end{align*}
So we have 
\begin{align*}
    (dx_1 \wedge \omega)(V_1, V_3, V_4) &= \frac{\pr \alpha_1}{\pr s}\left(w_1 \frac{\pr \overline{w_1}}{\pr t}+\overline{w}_1 \frac{\pr w_1}{\pr t}-w_3 \frac{\pr \overline{w_3}}{\pr t}-\overline{w}_3 \frac{\pr w_3}{\pr t}\right) \\&\quad- \frac{\pr \alpha_1}{\pr t}\left(w_1 \frac{\pr \overline{w_1}}{\pr s}+\overline{w}_1 \frac{\pr w_1}{\pr s}-w_3 \frac{\pr \overline{w_3}}{\pr s}-\overline{w}_3 \frac{\pr w_3}{\pr s}\right),
\end{align*}
and 
\begin{align*}
   (\re(dz_1 \wedge dz_2 \wedge dz_3))(V_1, V_3, V_4) &= \re(dz_1(V_1)dz_2(V_3)dz_3(V_4)- dz_1(V_1)dz_2(V_4)dz_3(V_3)\\
   &\quad-dz_1(V_4)dz_2(V_3)dz_3(V_1)+dz_1(V_3)dz_2(V_4)dz_3(V_1))\\
   &= \re\bigg(i\left(w_1\frac{\pr w_2}{\pr s}\frac{\pr w_3}{\pr t} - w_1\frac{\pr w_2}{\pr t}\frac{\pr w_3}{\pr s}+w_3 \frac{\pr w_1}{\pr t}\frac{\pr w_2}{\pr s}-w_3 \frac{\pr w_1}{\pr s}\frac{\pr w_2}{\pr t}\right)\bigg)\\
   &= -\im\left(w_1 \left(\frac{\pr w_2}{\pr s}\frac{\pr w_3}{\pr t}-\frac{\pr w_2}{\pr t}\frac{\pr w_3}{\pr s}\right) + w_3\left(\frac{\pr w_1}{\pr t}\frac{\pr w_2}{\pr s}-\frac{\pr w_1}{\pr s}\frac{\pr w_2}{\pr t}\right)\right).
\end{align*}
Therefore, 
\begin{align*}
    \varphi(V_1, V_3, V_4) = 0 &\iff \frac{\pr \alpha_1}{\pr s}\left(w_1 \frac{\pr \overline{w_1}}{\pr t}+\overline{w}_1 \frac{\pr w_1}{\pr t}-w_3 \frac{\pr \overline{w_3}}{\pr t}-\overline{w}_3 \frac{\pr w_3}{\pr t}\right) \\&\quad- \frac{\pr \alpha_1}{\pr t}\left(w_1 \frac{\pr \overline{w_1}}{\pr s}+\overline{w}_1 \frac{\pr w_1}{\pr s}-w_3 \frac{\pr \overline{w_3}}{\pr s}-\overline{w}_3 \frac{\pr w_3}{\pr s}\right)\\
    &\quad-\im\left(w_1 \left(\frac{\pr w_2}{\pr s}\frac{\pr w_3}{\pr t}-\frac{\pr w_2}{\pr t}\frac{\pr w_3}{\pr s}\right) + w_3\left(\frac{\pr w_1}{\pr t}\frac{\pr w_2}{\pr s}-\frac{\pr w_1}{\pr s}\frac{\pr w_2}{\pr t}\right)\right) = 0\\
    &\iff \frac{\pr \alpha_1}{\pr s}\left( \frac{\pr |{w_1}|^2}{\pr t}- \frac{\pr |w_3|^2}{\pr t}\right) - \frac{\pr \alpha_1}{\pr t}\left( \frac{\pr |{w_1}|^2}{\pr s}- \frac{\pr |w_3|^2}{\pr s} \right)\\
    &\quad-\im\left(w_1 \left(\frac{\pr w_2}{\pr s}\frac{\pr w_3}{\pr t}-\frac{\pr w_2}{\pr t}\frac{\pr w_3}{\pr s}\right) + w_3\left(\frac{\pr w_1}{\pr t}\frac{\pr w_2}{\pr s}-\frac{\pr w_1}{\pr s}\frac{\pr w_2}{\pr t}\right)\right) = 0.
\end{align*}
And similarly, 
\begin{align*}
     \varphi(V_2, V_3, V_4) = 0 &\iff \frac{\pr \alpha_1}{\pr s}\left( \frac{\pr |{w_2}|^2}{\pr t}- \frac{\pr |w_3|^2}{\pr t}\right) - \frac{\pr \alpha_1}{\pr t}\left( \frac{\pr |{w_2}|^2}{\pr s}- \frac{\pr |w_3|^2}{\pr s} \right)\\
    &\quad-\im\left(w_2 \left(\frac{\pr w_1}{\pr t}\frac{\pr w_3}{\pr s}-\frac{\pr w_1}{\pr s}\frac{\pr w_3}{\pr t}\right) + w_3\left(\frac{\pr w_1}{\pr t}\frac{\pr w_2}{\pr s}-\frac{\pr w_1}{\pr s}\frac{\pr w_2}{\pr t}\right)\right) = 0.
\end{align*}
Finally, note that to obtain a coassociative $4$-fold, we need $F$ to be a immersion. That is, the vector fields $V_1, V_2, V_3, V_4$ need to be linearly independent. To summarize, we have shown the following: 
\begin{theorem}
    Let $V_1, V_2, V_3, V_4$ be linearly independent vector fields as above, $\alpha(s, t)$ be the surface and $w_1, w_2, w_3$ be the complex coordinates as defined above. The $V_i$ span a coassociative subspace if and only if 
    \begin{align}
    \label{coasseq1}
        \re(w_1w_2w_3) = c,
    \end{align}
    for some $c \in \R$, 
    \begin{align} 
    \begin{split} 
    \label{coasseq2}
        &\frac{\pr \alpha_1}{\pr s}\left( \frac{\pr |{w_1}|^2}{\pr t}- \frac{\pr |w_3|^2}{\pr t}\right) - \frac{\pr \alpha_1}{\pr t}\left(\frac{\pr |{w_1}|^2}{\pr s}- \frac{\pr |w_3|^2}{\pr s} \right)\\
    &\quad-\im\left(w_1 \left(\frac{\pr w_2}{\pr s}\frac{\pr w_3}{\pr t}-\frac{\pr w_2}{\pr t}\frac{\pr w_3}{\pr s}\right) + w_3\left(\frac{\pr w_1}{\pr t}\frac{\pr w_2}{\pr s}-\frac{\pr w_1}{\pr s}\frac{\pr w_2}{\pr t}\right)\right) = 0,
    \end{split}
    \end{align}
    and 
    \begin{align}
    \label{coasseq3}
        \begin{split}
            &\frac{\pr \alpha_1}{\pr s}\left( \frac{\pr |{w_2}|^2}{\pr t}- \frac{\pr |w_3|^2}{\pr t}\right) - \frac{\pr \alpha_1}{\pr t}\left( \frac{\pr |{w_2}|^2}{\pr s}- \frac{\pr |w_3|^2}{\pr s} \right)\\
    &\quad-\im\left(w_2 \left(\frac{\pr w_1}{\pr t}\frac{\pr w_3}{\pr s}-\frac{\pr w_1}{\pr s}\frac{\pr w_3}{\pr t}\right) + w_3\left(\frac{\pr w_1}{\pr t}\frac{\pr w_2}{\pr s}-\frac{\pr w_1}{\pr s}\frac{\pr w_2}{\pr t}\right)\right) = 0.
        \end{split}
    \end{align}
\end{theorem}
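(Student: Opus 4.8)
The plan is to use the linear-algebra characterization underlying Proposition~\ref{coasociativeprop}: a $4$-plane in $\R^7$ is coassociative, for some orientation, if and only if $\varphi$ restricts to zero on it, so for the $4$-plane spanned by $V_1, V_2, V_3, V_4$ this amounts, by multilinearity and antisymmetry, to the four scalar equations $\varphi(V_i, V_j, V_k) = 0$ over the triples $i < j < k$ in $\{1, 2, 3, 4\}$. To evaluate these cleanly I would use the expression $\varphi = dx_1 \wedge \omega + \re(dz_1 \wedge dz_2 \wedge dz_3)$, with $\omega = \tfrac{i}{2}\sum_j dz_j \wedge d\overline{z}_j$, adapted to the splitting $\R^7 = \R \oplus \C^3$, and read off the contractions $dx_1(V_i)$, $dz_j(V_i)$, $d\overline{z}_j(V_i)$ directly from the displayed formulas for the $V_i$ --- noting in particular that $dx_1(V_1) = dx_1(V_2) = 0$, $dx_1(V_3) = \pr\alpha_1/\pr s$, $dx_1(V_4) = \pr\alpha_1/\pr t$, and that every phase $e^{\pm i\theta_j}$ enters in a combination that cancels once $\theta_1 + \theta_2 + \theta_3 = 0$ is used.

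For the triples $(1,2,3)$ and $(1,2,4)$: since $V_1$ and $V_2$ annihilate $dx_1$, the term $dx_1 \wedge \omega$ contributes only a multiple of $dx_1(V_3)\,\omega(V_1, V_2)$, resp.\ $dx_1(V_4)\,\omega(V_1, V_2)$, and a one-line check gives $\omega(V_1, V_2) = 0$ --- the $z_3$-parts of $V_1$ and $V_2$ coincide, while their $z_1$- and $z_2$-parts are carried separately. Thus only $\re(dz_1 \wedge dz_2 \wedge dz_3)$ survives; expanding its $3 \times 3$ determinant in the entries $dz_j(V_i)$ and cancelling phases collapses it to $\pr_s \re(w_1 w_2 w_3) = 0$, and the triple $(1,2,4)$ gives the same with $\pr_t$. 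Together these say $\re(w_1 w_2 w_3)$ is constant, which is \eqref{coasseq1}.

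For the triple $(1,3,4)$: now $dx_1(V_3)$ and $dx_1(V_4)$ are nonzero, so $(dx_1 \wedge \omega)(V_1, V_3, V_4)$ is a linear combination of $(\pr_s\alpha_1)\,\omega(V_1, V_4)$ and $(\pr_t\alpha_1)\,\omega(V_1, V_3)$, and one computes $\omega(V_1, V_4) = -\tfrac12 \pr_t(|w_1|^2 - |w_3|^2)$ and $\omega(V_1, V_3) = -\tfrac12 \pr_s(|w_1|^2 - |w_3|^2)$. The holomorphic piece $\re(dz_1 \wedge dz_2 \wedge dz_3)(V_1, V_3, V_4)$ expands --- with $dz_2(V_1) = 0$ killing two of the six determinant terms --- into $-\im\!\big(w_1(\pr_s w_2\, \pr_t w_3 - \pr_t w_2\, \pr_s w_3) + w_3(\pr_t w_1\, \pr_s w_2 - \pr_s w_1\, \pr_t w_2)\big)$, and summing the two contributions yields \eqref{coasseq2}. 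The triple $(2,3,4)$ is the identical computation after interchanging the indices $1$ and $2$ (equivalently $w_1 \leftrightarrow w_2$, $\theta_1 \leftrightarrow \theta_2$), and produces \eqref{coasseq3}. Since every step is reversible, the system \eqref{coasseq1}--\eqref{coasseq3} is equivalent to the vanishing of $\varphi$ on all four triples, hence to $V_1, V_2, V_3, V_4$ spanning a coassociative subspace.

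The only genuine obstacle is the bookkeeping: keeping the signs straight in the antisymmetrized expansions of $dx_1 \wedge \omega$ and $dz_1 \wedge dz_2 \wedge dz_3$ on the two triples involving both $V_3$ and $V_4$, and confirming that every $e^{\pm i\theta_j}$ factor cancels --- which is forced, since $\varphi$ is $\bT^2$-invariant and $\theta_1 + \theta_2 + \theta_3 = 0$. Everything else is a direct, if somewhat lengthy, computation with the explicit vector fields.
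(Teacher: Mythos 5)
Your proposal matches the paper's proof essentially step for step: the same decomposition $\varphi = dx_1 \wedge \omega + \re(dz_1 \wedge dz_2 \wedge dz_3)$, the same reduction to the four equations $\varphi(V_i,V_j,V_k)=0$, the observations $\omega(V_1,V_2)=0$ and $\omega(V_1,V_4) = -\tfrac12\,\pr_t\!\left(|w_1|^2-|w_3|^2\right)$ (and its analogues), and the same determinant expansion of the holomorphic piece yielding the $\re(w_1w_2w_3)$ and $\im(\cdots)$ terms. The argument is correct and complete in outline, with only the routine sign and phase-cancellation bookkeeping left to carry out, exactly as in the paper.
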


Now, we give explicit examples of $\mathbb{T}^2$-invariant coassociatives. 
\begin{example}
\label{example0}
    Let us take 
    \begin{align*}
        \alpha_1 = s+t^l,\quad w_1 = f(t),\quad w_2 = f(t), \quad w_3 = if(t),
    \end{align*}
    for a real-valued function $f$ such that $f(t) \neq 0$ and $f'(t) \neq 0$ for all $t$ and some $l \in \Z_{>0}$. First, note that $V_1, V_2, V_3, V_4$ are linearly independent. Indeed, we have 
    \begin{align*}
        V_1 = \begin{bmatrix}
            0\\ie^{i\theta_1}f\\0\\e^{i\theta_3}f
        \end{bmatrix},V_2 = \begin{bmatrix}
            0\\0\\ie^{i\theta_2}f\\e^{i\theta_3}f
        \end{bmatrix}, V_3 = \begin{bmatrix}
            1\\0\\0\\0
        \end{bmatrix},V_4 = \begin{bmatrix}
            lt^{l-1}\\e^{i\theta_1}f'\\e^{i\theta_2}f'\\ie^{i\theta_3}f'
        \end{bmatrix},
    \end{align*}
    and if $a_1V_1 + a_2 V_2 + a_3 V_3 + a_4 V_4 = 0$ for $a_i \in \R$, then we have 
    \begin{align*}
        a_3 +a_4lt^{l-1}  = 0, \quad -a_1f + a_4 f'i = 0, \quad -a_2f + a_4 f'i = 0, \quad a_1 f + a_2 f + a_4 f'i = 0,
    \end{align*}
    from which we obtain $a_1 = a_2 = a_3 = a_4 = 0$. Then, as $w_1 w_2 w_3 = if^3, \frac{\pr f}{\pr s} = 0$ and $|w_1|^2=|w_2|^2=|w_3|^2$, we have that \eqref{coasseq1}, \eqref{coasseq2} and \eqref{coasseq3} are satisfied. 
\end{example}

\begin{example}
\label{example1}
    Take 
    \begin{align*}
        \alpha_1 = s+t,\quad w_1 = f(s)g(t),\quad w_2 = f(s)g(t), \quad w_3 = if(s)g(t),
    \end{align*}
    for real-valued functions $f,g$ such that $f(s),g(t) \neq 0$ and $f(s)g'(t)-f'(s)g(t) \neq 0$ for all $s,t$ (for instance, take $f(s) = e^{2s}, g(t) = e^t$). Then, similar to Example \ref{example0}, since $w_1 w_2 w_3 = if^3g^3$ and $|w_1|^2=|w_2|^2=|w_3|^2$, we have that \eqref{coasseq1}, \eqref{coasseq2} and \eqref{coasseq3} are satisfied. Furthermore, $V_1, V_2, V_3, V_4$ are linearly independent as 
    \begin{align*}
        V_1 = \begin{bmatrix}
            0\\ie^{i\theta_1}fg\\0\\e^{i\theta_3}fg
        \end{bmatrix},V_2 = \begin{bmatrix}
            0\\0\\ie^{i\theta_2}fg\\e^{i\theta_3}fg
        \end{bmatrix}, V_3 = \begin{bmatrix}
            1\\e^{i\theta_1}f'g\\e^{i\theta_2}f'g\\ie^{i\theta_3}f'g
        \end{bmatrix},V_4 = \begin{bmatrix}
            1\\e^{i\theta_1}fg'\\e^{i\theta_2}fg'\\ie^{i\theta_3}fg'
        \end{bmatrix},
    \end{align*}
    and if $a_1 V_1 + a_2 V_2 + a_3 V_3 + a_4 V_4 = 0$ for $a_i \in \R$, then 
    \begin{gather*}
        a_3 + a_4 = 0, \quad a_1ifg + a_3f'g+a_4fg' = 0, \quad a_2ifg + a_3f'g+a_4fg' = 0,\\ a_1 fg + a_2 fg + ia_3 f'g + ia_4 fg' = 0. 
    \end{gather*}
    Since $f, g \neq 0$ and $f(s)g'(t)-f'(s)g(t) \neq 0$ , we have that $a_1 = a_2 = a_3 = a_4 = 0$.
\end{example}

\begin{rmk}
    Taking $\alpha_1 = s^m +t^l$ for some $m \in \Z_{>0}$ instead of $\alpha_1 = s +t^l$ in Example \ref{example0} gives us a coassociative $4$-fold except for points where $s = 0$. Furthermore, we can generalize the above examples by taking $w_1, w_2, w_3$ such that $\re(w_1 w_2 w_3)$ is constant and then solving the equations \eqref{coasseq2} and \eqref{coasseq3} which give linear PDEs for $\frac{\pr \alpha_1}{\pr s}$ and $\frac{\pr \alpha_1}{\pr t}$.
\end{rmk}
The following example illustrates the importance of $V_1, V_2, V_3, V_4$ being linearly independent.
\begin{example}
\label{singularexample}
    Let us take the surface 
\begin{align*}
    \alpha_1 = s^2 + t^2,\quad w_1 = s + it,\quad w_2 = c(s+it),\quad w_3 = i(s-it)^2,
\end{align*}
for some $c \in \R$.\\

Note that \begin{align*}
    w_1w_2w_3 = c(s+it)(s+it)i(s-it)^2 = ci(s^2+t^2)^2 ,
\end{align*}
and thus 
\begin{align*}
    \re(w_1w_2w_3) = 0,
\end{align*}
which means the equation \eqref{coasseq1} is satisfied.
Then, substituting these expressions in equation \eqref{coasseq2} gives us 
\begin{align*}
    &2s(2t-2(s^2+t^2)2t)
    - 2t(2s-2(s^2+t^2)2s) \\&\quad-\im((s+it)(2c(s-it) +2c(s-it)) + i(s-it)^2(ci - ci)) \\
    &= -\im(4c(s+it)(s-it)) = -4c\im(s^2+t^2) = 0,
\end{align*}
and doing the same for \eqref{coasseq3} gives us 
\begin{align*}
    &2s(2c^2t-2(s^2+t^2)2t) -2t(2c^2s-2(s^2+t^2)2s)\\
    &\quad-\im(c(s+it)(-2(s-it)-2(s-it)) + i(s-it)^2(ci - ci)) \\
    &= \im(4c(s+it)(s-it)) = 4c\im(s^2+t^2) = 0.   
\end{align*}
But this cannot describe a coassociative $4$-fold since we have that $V_1, V_2, V_3, V_4$ are linearly dependent as one can show that $V_1 + V_2 + tV_3 -sV_4 = 0$. 
\end{example}

Let us now give a simpler recipe to construct these coassociatives from real-valued functions. First, notice that we can rotate $w_1, w_2$ to make them real-valued. That is, we can always choose angles $\theta_1, \theta_2$ such that $e^{i\theta_1}\cdot w_1(s, t)$ and $e^{i\theta_2}\cdot w_2(s, t)$ are real. So let us assume that 
\begin{align*}
    w_1(s, t) = \alpha_2(s, t), \\
    w_2(s, t) = \alpha_4(s, t). 
\end{align*}
Then, from \eqref{coasseq1}, we have that for some $c \in \R$, \begin{align*}
    \alpha_2 \alpha_4 \alpha_6 = c \implies \alpha_6 = \frac{c}{\alpha_2 \alpha_4}.
\end{align*}
Using this, \eqref{coasseq2} gives us 
\begin{align}
\begin{split}
\label{smpl1}   
    &\frac{\pr \alpha_1}{\pr s}\left(\frac{\pr\left(\alpha_2^2-\left(\frac{c^2}{(\alpha_2\alpha_4)^2 }+ \alpha_7^2\right)\right)}{\pr t}\right) -\frac{\pr \alpha_1}{\pr t}\left( \frac{\pr\left(\alpha_2^2-\left(\frac{c^2}{(\alpha_2\alpha_4)^2 }+ \alpha_7^2\right)\right)}{\pr s}  \right)\\
    &\quad+\frac{\pr \alpha_4}{\pr t}\frac{\pr(\alpha_2 \alpha_7)}{\pr s}-\frac{\pr \alpha_4}{\pr s}\frac{\pr(\alpha_2 \alpha_7)}{\pr t}= 0,
\end{split}
\end{align}
and from \eqref{coasseq3}, we get 
\begin{align}
\begin{split}
\label{smpl2} 
&\frac{\pr \alpha_1}{\pr s}\left(\frac{\pr\left(\alpha_4^2-\left(\frac{c^2}{(\alpha_4\alpha_6)^2 }+ \alpha_7^2\right)\right)}{\pr t}\right) -\frac{\pr \alpha_1}{\pr t}\left( \frac{\pr\left(\alpha_4^2-\left(\frac{c^2}{(\alpha_4\alpha_6)^2 }+ \alpha_7^2\right)\right)}{\pr s}  \right)\\
    &\quad+\frac{\pr \alpha_2}{\pr s}\frac{\pr(\alpha_4 \alpha_7)}{\pr t}-\frac{\pr \alpha_2}{\pr t}\frac{\pr(\alpha_4 \alpha_7)}{\pr s} = 0.
\end{split}
\end{align}
Therefore, we can choose arbitrary real-valued functions $\alpha_2, \alpha_4, \alpha_7$ and as we know that $\alpha_6$ is determined from $\alpha_2$ and $\alpha_4$, we can solve the two linear PDEs \eqref{smpl1}, \eqref{smpl2}  for $\frac{\pr \alpha_1}{\pr s}$ and $\frac{\pr \alpha_1}{\pr t}$ (which may not always be solvable) to obtain examples of  $\bT^2$-invariant coassociatives. 

\section{Future directions}

We gave an explicit example of a cohomogeneity two $\bT^2$-invariant coassociative and an ansatz to construct such coassociatives using real-valued functions. It remains to be seen if we can come up with more examples using these equations. It will also be interesting to see if these coassociatives lie in some larger family, such as the ones characterized by the pointwise behavior of the second fundamental form (see \cite{danielfoxphd} for details).  It is yet to be determined how many coassociative submanifolds can be constructed this way and what the dimension of such families would be. \\

We can also explore if the methods used in this paper to construct calibrated submanifolds can be employed to find examples of other calibrated submanifolds with symmetries. Furthermore, we can check if these methods can be generalized to obtain examples of a special class of maps known as Smith maps (see \cite{bubbletree} for details). \\

The constructions in this paper are mostly local but it would be enlightening to look at their global properties. We can check if the solutions can be extended globally and if they develop any singularities. In addition, one can try to come up with examples which have a non-trivial global topology.\\

\textbf{Data availability} Data sharing not applicable to this article as no datasets were generated or analysed during the current study.\\

\textbf{Funding} The author declares that no funds, grants, or other support were received during the preparation
of this manuscript.

\section*{Declarations}
\textbf{Conflict of interest} The author declares that he has no financial or non-financial interests to disclose. \\

\newpage
\bibliography{biblo}

\end{document}